\newtheorem{theorem}{Theorem}[section]
\newtheorem{definition}[theorem]{Definition}
\newtheorem{corollary}[theorem]{Corollary}
\newtheorem{lemma}[theorem]{Lemma}
\numberwithin{equation}{section}
\newcommand\numberthis{\addtocounter{equation}{1}\tag{\theequation}}
\begin{document}

\title[The complex zeros of random orthogonal polynomials]{The complex zeros of random orthogonal polynomials}

\author[Christopher Corley]{Christopher Corley}

\address{Department of Mathematics, The University of Tennessee at Chattanooga, 304 Lupton Hall (Dept. 6956), 615 McCallie Avenue, Chattanooga, Tennessee 37403, United States of America}

\email{hry385@mocs.utc.edu}

\author[Andrew Ledoan]{Andrew Ledoan$^{\ast}$}

\thanks{$^{\ast}$The research of the second author was supported by National Science Foundation Grant DMS-1852288.}

\address{Department of Mathematics, The University of Tennessee at Chattanooga, 317 Lupton Hall (Dept. 6956), 615 McCallie Avenue, Chattanooga, Tennessee 37403, United States of America}

\email{andrew-ledoan@utc.edu}

\author[Aaron Yeager]{Aaron Yeager}

\address{Department of Mathematics, College of Coastal Georgia, University System of Georgia, 142 Jones Building, One College Drive, Brunswick, Georgia 31520, United States of America}

\email{ayeager@ccga.edu}

\makeatletter
\@namedef{subjclassname@2020}{\textup{2020} Mathematics Subject Classification}
\makeatother

\subjclass[2020]{Primary 30C15; Secondary 30C10, 30C40, 30E15, 32A10, 32A25, 42C05}

\keywords{Density of zeros; Level-crossing analysis; Orthogonal polynomials; Random polynomials; Ratio distribution}

\begin{abstract}
We utilize Cauchy's argument principle in combination with the Jacobian of a holomorphic function in several complex variables and the first moment of a ratio of two correlated complex normal random variables to prove explicit formulas for the density and the mean distribution of complex zeros of random polynomials spanned by orthogonal polynomials on the unit circle and on the unit disk. We then inquire into the consequences of their asymptotical evaluations.
\end{abstract}

\maketitle

\thispagestyle{empty}

\section{Introduction} \label{section_1}

Random polynomials are deeply studied and discussed. The literature on this beautiful and fascinating subject is extensive. Of the works which have come under our observation we mention the papers \cite{SheppVanderbei1995} by Shepp and Vanderbei and \cite{IbragimovZeitouni1997} by Ibragimov and Zeitouni. More specifically, Shepp and Vanderbei studied the complex zeros of the random algebraic polynomial $H_{n} (z) = \sum_{j = 0}^{n} \epsilon_{j} z^{j}$ with $z \in \mathbb{C}$ whose coefficients $\epsilon_{j}$ are independent and identically distributed standard real normal random variables. Combined with the classical argument principle, the Cholesky factorization generates four uncorrelated standard real normal and hence independent random variables from the real and imaginary parts of $H_{n}$ and $H_{n}^{\prime}$. These random variables are used to express the mean of $H_{n}^{\prime} / H_{n}$ as a complex-valued function, which is evaluated using the calculus of residues. Their method produces striking explicit formulas that generalize Kac's \cite{Kac1943} density and integral formula for the mean number of zeros in any measurable subset of the complex plane. The most characteristic and indeed the defining thing about the zeros is that they tend to concentrate asymptotically near the unit circle $\mathbb{T}$. By a different method, Ibragimov and Zeitouni studied the properties of the zeros under a wide class of distributions of the coefficients. Furthermore, they corroborated Shepp and Vanderbei's results for the normal distribution. The present paper is motivated by the results of papers \cite{SheppVanderbei1995, IbragimovZeitouni1997}. One of the objects we aim at is to extend the explicit formulas and asymptotic expansions for the density and the mean number of zeros to random polynomials spanned by orthogonal polynomials on $\mathbb{T}$ (OPUC) of the Nevai class, Szeg\H{o} class, and Sthal, Totik, and Ullman class, and by orthogonal polynomials on the unit disk $\mathbb{D}$, also known as Bergman polynomials.

We begin by establishing some notation. Let $\{\alpha_{j}\}_{j = 0}^{n}$ and $\{\beta_{j}\}_{j = 0}^{n}$ be sequences of independent and identically distributed standard real normal random variables defined on a probability space determined by the 3-tuple $(\Omega, \mathfrak{F}, \operatorname{Pr})$. As per usual, $\Omega$ is a set with generic elements $\omega$, $\mathfrak{F}$ is a $\sigma$-field of subsets of $\Omega$, and $\operatorname{Pr} \colon \mathfrak{F} \rightarrow [0, 1]$ is a probability measure on $\mathfrak{F}$. Let $\{\eta_{j}\}_{j = 0}^{n}$ be a sequence of standard complex normal random variables $\eta_{j} = \alpha_{j} + i \beta_{j}$ with density $\frac{1}{\pi} e^{z \overline{z}}$, mean $\operatorname{\mathbb{E}} [\eta_{j}] = 0$, and variance $\operatorname{\mathbb{E}} [\eta_{j} \overline{\eta_{j}}] = 1$. Let $\{\phi_{j}\}_{j = 0}^{n}$ be a sequence of polynomials satisfying the conditions $\deg \phi_{j} = j$ and $\phi_{0} = 1$. Then, let $\Phi_{n} (z) = \sum_{j = 0}^{n} \eta_{j} \phi_{j} (z)$ denote the random polynomial of order $n$ defined by the sequences $\{\eta_{j}\}_{j = 0}^{n}$ and $\{\phi_{j}\}_{j = 0}^{n}$. Further, let $N_{\Theta}^{\Phi_{n}, v}$ denote the random number of complex roots in an arbitrary domain $\Theta \subset \mathbb{C}$ of the random equation $\Phi_{n} (z) = v$ with $v \in \mathbb{C}$.

Our starting point is the argument principle. The method requires several new ingredients. In particular, the proof of the explicit formula for the mean $\operatorname{\mathbb{E}} [N_{\Theta}^{\Phi_{n}, v}]$ is based upon the explicit role of the Jacobian in the pairing of the roots in $\Theta$ of the random equation $\Phi_{n} (z) = v$ with the coefficients of the random polynomial $\Phi_{n} - v$ combined with Wu's \cite{Wu2019} closed form solution to the first moment of a ratio of two complex normal random variables. As will be shown later, the computation of a holomorphic Jacobian matrix of this mapping allows for the interchange of mean and contour integration. Furthermore, the numerator and denominator of the ratio random variable are correlated and have arbitrary means. Their real and imaginary parts and covariances must satisfy certain constraints. This argument principle and random ratio-based method eliminates the requirement of having to treat these real and imaginary parts via orthogonal factorization and to compute the residues of the mean $\operatorname{\mathbb{E}} [\Phi_{n}^{\prime} / (\Phi_{n} - v)]$. Wu's theorem can be stated as follows:

\begin{theorem}[Wu \protect{\cite{Wu2019}, Theorem 1}] \label{Thm-1.1}
Let $\mathbf{X} = \begin{bmatrix*} X_{1} & X_{2} \end{bmatrix*}^{T}$ be a general complex-valued bivariate normal random variable vector with mean
\begin{equation*}
\operatorname{\mathbb{E}} [\mathbf{X}]
 =
\begin{bmatrix*}
\mu_{X_{1}} \\ \addlinespace[1mm]
\mu_{X_{2}}
\end{bmatrix*}
\end{equation*}
and covariance matrix
\begin{equation*}
\operatorname{\mathbb{E}}
\begin{bmatrix*}
(X_{1} - \mu_{X_{1}}) (\overline{X_{1} - \mu_{X_{1}}}) & (X_{1} - \mu_{X_{1}}) (\overline{X_{2} - \mu_{X_{2}}}) \\ \addlinespace[1mm]
(X_{2} - \mu_{X_{2}}) (\overline{X_{1} - \mu_{X_{1}}}) & (X_{2} - \mu_{X_{2}}) (\overline{X_{2} - \mu_{X_{2}}})
\end{bmatrix*}
 = 
\begin{bmatrix*}
\sigma_{X_{1}}^{2} & \rho \sigma_{X_{1}} \sigma_{X_{2}} \\ \addlinespace[1mm]
\overline{\rho} \sigma_{X_{2}} \sigma_{X_{1}} & \sigma_{X_{2}}^{2}
\end{bmatrix*},
\end{equation*}
where
\begin{equation*}
\sigma_{X_{j}}^{2}
 = \operatorname{\mathbb{E}} [\lvert X_{j} \rvert^{2}] + \lvert \operatorname{\mathbb{E}} [X_{j}] \rvert^{2} \quad \text{with $\sigma_{X_{j}}^{2} \in \mathbb{R}_{\geq 0}$}
\end{equation*}
and
\begin{equation*}
\sigma_{X_{j}}
 = \sqrt{\sigma_{X_{j}}^{2}} \quad \text{for $j = 1, 2$};
\end{equation*}
furthermore,
\begin{equation*}
\rho
 = \operatorname{\mathbb{E}} \left[\frac{X_{1} \overline{X_{2}}}{\sigma_{X_{1}} \sigma_{X_{2}}}\right] \quad \text{with $\rho \in \mathbb{C}$ and $\, \lvert \rho \rvert \leq 1$}.
\end{equation*}
If $X_{2}$ has mean $\mu_{X_{2}} = 0$, then the ratio random variable $X_{1} / X_{2}$ has mean
\begin{equation} \label{eq-1.1}
\operatorname{\mathbb{E}} \left[\frac{X_{1}}{X_{2}}\right]
 = \frac{\rho \sigma_{X_{1}}}{\sigma_{X_{2}}}.
\end{equation}
Contrarily, if $\mu_{X_{2}} \neq 0$, then
\begin{equation} \label{eq-1.2}
\operatorname{\mathbb{E}} \left[\frac{X_{1}}{X_{2}}\right]
 = \frac{\mu_{X_{1}}}{\mu_{X_{2}}} + \left(\frac{\rho \sigma_{X_{1}}}{\sigma_{X_{2}}} - \frac{\mu_{X_{1}}}{\mu_{X_{2}}}\right) \exp \left(-\frac{\lvert \mu_{X_{2}} \rvert^{2}}{\sigma_{X_{2}}^{2}}\right).
\end{equation}
\end{theorem}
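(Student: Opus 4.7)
The plan is to reduce both assertions to an evaluation of $\operatorname{\mathbb{E}}[1/X_{2}]$ by projecting $X_{1}$ onto $X_{2}$. I would set $c = \rho\sigma_{X_{1}}/\sigma_{X_{2}}$ and define $Y = X_{1} - cX_{2}$; a direct covariance calculation then gives $\operatorname{\mathbb{E}}[(Y - \operatorname{\mathbb{E}}[Y])\overline{(X_{2} - \mu_{X_{2}})}] = 0$. Because $(X_{1}, X_{2})$ is jointly proper complex normal, so is $(Y, X_{2})$, and vanishing Hermitian covariance forces $Y$ and $X_{2}$ to be independent. Since $1/X_{2}$ is a measurable function of $X_{2}$, this yields the clean decomposition
\[
\operatorname{\mathbb{E}}\!\left[\frac{X_{1}}{X_{2}}\right] = c + \operatorname{\mathbb{E}}[Y]\,\operatorname{\mathbb{E}}\!\left[\frac{1}{X_{2}}\right], \qquad \operatorname{\mathbb{E}}[Y] = \mu_{X_{1}} - c\mu_{X_{2}},
\]
reducing the problem to computing $\operatorname{\mathbb{E}}[1/X_{2}]$ in the two cases.

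If $\mu_{X_{2}} = 0$, circular symmetry makes the computation immediate: writing $X_{2} = Re^{i\Theta}$ with $\Theta$ uniform on $[0, 2\pi)$ and independent of $R$, one has $\operatorname{\mathbb{E}}[1/X_{2}] = \operatorname{\mathbb{E}}[R^{-1}]\operatorname{\mathbb{E}}[e^{-i\Theta}] = 0$, and the decomposition collapses to \eqref{eq-1.1}. For the generic case $\mu_{X_{2}} \neq 0$, the heart of the argument is the identity
\[
\operatorname{\mathbb{E}}\!\left[\frac{1}{X_{2}}\right] = \frac{1}{\mu_{X_{2}}}\!\left(1 - \exp\!\left(-\frac{\lvert\mu_{X_{2}}\rvert^{2}}{\sigma_{X_{2}}^{2}}\right)\right).
\]
I would derive this by writing $1/x = \overline{x}/\lvert x\rvert^{2}$, invoking the Laplace representation $1/\lvert x\rvert^{2} = \int_{0}^{\infty} e^{-s\lvert x\rvert^{2}}\,ds$, and exchanging the order of integration to obtain $\operatorname{\mathbb{E}}[1/X_{2}] = \int_{0}^{\infty}\operatorname{\mathbb{E}}[\overline{X_{2}}\,e^{-s\lvert X_{2}\rvert^{2}}]\,ds$. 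Completing the square in the inner complex Gaussian integral produces a factor $\overline{\mu_{X_{2}}}(1 + s\sigma_{X_{2}}^{2})^{-2}$ times an exponential of the form $\exp(-s\lvert\mu_{X_{2}}\rvert^{2}/(1 + s\sigma_{X_{2}}^{2}))$; the substitutions $t = 1 + s\sigma_{X_{2}}^{2}$ and then $u = 1/t$ evaluate the remaining $s$-integral elementarily and yield the claimed formula. Substituting this back into the decomposition and collecting the coefficient of the exponential factor as $c - \mu_{X_{1}}/\mu_{X_{2}} = \rho\sigma_{X_{1}}/\sigma_{X_{2}} - \mu_{X_{1}}/\mu_{X_{2}}$ produces \eqref{eq-1.2}.

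The main obstacle I expect is justifying the Fubini interchange behind the Laplace trick, since $1/x$ against a complex Gaussian density is only conditionally integrable near the origin. The trick is precisely what tames this: it replaces the singular kernel $1/\lvert x\rvert^{2}$ by a nonnegative one, so the double integral of $\lvert\overline{x}\rvert e^{-s\lvert x\rvert^{2}}$ against the Gaussian density becomes absolutely integrable for each $s > 0$ and integrable in $s$ thanks to the $(1 + s\sigma_{X_{2}}^{2})^{-2}$ decay coming out of the normalization. Once that justification is in place, everything else reduces to completion of the square and one explicit change of variables, and both \eqref{eq-1.1} and \eqref{eq-1.2} drop out of the same decomposition.
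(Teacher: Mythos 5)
The paper does not prove this statement at all: Theorem \ref{Thm-1.1} is imported verbatim from Wu \cite{Wu2019} and used as a black box, so there is no in-paper proof to compare against. Your proposal is, in effect, a self-contained derivation of the cited result, and it is correct. The decomposition $Y = X_{1} - cX_{2}$ with $c = \rho\sigma_{X_{1}}/\sigma_{X_{2}}$ does make $\operatorname{cov}(Y, X_{2}) = 0$, and the reduction to $\operatorname{\mathbb{E}}[X_{1}/X_{2}] = c + \operatorname{\mathbb{E}}[Y]\operatorname{\mathbb{E}}[1/X_{2}]$ is sound; the Laplace-transform computation of $\operatorname{\mathbb{E}}[1/X_{2}]$ checks out (the inner Gaussian integral gives $\overline{\mu_{X_{2}}}(1+s\sigma_{X_{2}}^{2})^{-2}\exp(-s\lvert\mu_{X_{2}}\rvert^{2}/(1+s\sigma_{X_{2}}^{2}))$ and the substitutions $t = 1+s\sigma_{X_{2}}^{2}$, $u = 1/t$ yield exactly $\mu_{X_{2}}^{-1}(1-\exp(-\lvert\mu_{X_{2}}\rvert^{2}/\sigma_{X_{2}}^{2}))$), and the Tonelli justification is legitimate since the kernel $\lvert x\rvert e^{-s\lvert x\rvert^{2}}$ is nonnegative and the iterated integral reduces to $\int \lvert x\rvert^{-1}$ against a Gaussian density, which is finite in two real dimensions. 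Two points deserve explicit mention. First, the step ``vanishing Hermitian covariance forces $Y$ and $X_{2}$ to be independent'' requires that the vector be \emph{proper} (vanishing pseudo-covariance), which is not visible in the theorem statement as quoted but is exactly the hypothesis under which Wu works and which the present paper verifies separately for its application ($\boldsymbol{\Sigma}_{\mathbf{Z}\overline{\mathbf{Z}}} = \mathbf{0}$); you invoke properness, so you are consistent with the intended hypotheses, but it should be stated as an assumption rather than taken for granted. Second, your argument quietly confirms that the displayed formula for $\sigma_{X_{j}}^{2}$ in the statement (with a plus sign) is a typographical slip for $\operatorname{\mathbb{E}}[\lvert X_{j}\rvert^{2}] - \lvert\operatorname{\mathbb{E}}[X_{j}]\rvert^{2}$; your computation uses the correct (centered) variance throughout.
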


Theorem \ref{Thm-1.1} suggests that $\operatorname{\mathbb{E}} [X_{1} / X_{2}] \neq \mu_{X_{1}} / \mu_{X_{2}}$ in general when $\mu_{X_{1}} \neq 0$. Two immediate corollaries are: (a) The expression \eqref{eq-1.2} approaches the expression \eqref{eq-1.1} when $\lvert \mu_{X_{2}} \vert \to 0$; (b) in the case where $X_{1}$ and $X_{2}$ are independent, the expression \eqref{eq-1.2} approaches 0 when $\lvert \mu_{X_{2}} \vert \to 0$. In our application we need to consider the covariance and pseudo-covariance matrices and to show that the latter matrix vanishes.

To state our main new theorems we consider the reproducing kernel $K_{n} (z, w) = \sum_{j = 0}^{n} \phi_{j} (z) \overline{\phi_{j} (w)}$, where $z$ and $w$ are points of $\mathbb{C}$, and, for nonnegative integers $k$ and $l$, its derivatives $K_{n}^{(k, l)} (z, w) = \sum_{j = 0}^{n} \phi_{j}^{(k)} (z) \overline{\phi_{j}^{(l)} (w)}$.

\begin{theorem} \label{Thm-1.2}
For each domain $\Theta \subset \mathbb{C}$,
\begin{equation*}
\operatorname{\mathbb{E}} [N_{\Theta}^{\Phi_{n}, v}]
 = \frac{1}{2 \pi i} \oint_{\partial \Theta} \frac{\overline{K_{n}^{(0, 1)} (z, z)}}{K_{n} (z, z)} \, \exp \left(-\frac{\lvert v \rvert^{2}}{K_{n} (z, z)}\right) dz.
\end{equation*}
\end{theorem}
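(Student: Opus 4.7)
The plan is to apply the classical argument principle to represent the counting function $N_{\Theta}^{\Phi_{n}, v}$ as a contour integral, pull the expectation inside, and then reduce the pointwise expected integrand to the ratio of two correlated complex normals handled by Theorem \ref{Thm-1.1}. Concretely, for fixed $v$ the event that $\Phi_{n} - v$ has a zero on $\partial \Theta$ has probability zero (the discriminant and boundary-vanishing conditions cut out a thin subset of coefficient space), so almost surely
\begin{equation*}
N_{\Theta}^{\Phi_{n}, v} = \frac{1}{2\pi i}\oint_{\partial\Theta} \frac{\Phi_{n}^{\prime}(z)}{\Phi_{n}(z) - v}\,dz.
\end{equation*}
I then take expectations and exchange $\operatorname{\mathbb{E}}$ with the contour integral. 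The holomorphic Jacobian advertised in the introduction enters here: viewing the coefficient-to-roots mapping $(\eta_{0},\ldots,\eta_{n}) \mapsto (z_{1},\ldots,z_{n},\eta_{0})$ as a biholomorphism off a measure-zero set, one obtains a joint density for the zero configuration that, combined with a Fubini/dominated-convergence majorant for $\Phi_{n}^{\prime}/(\Phi_{n}-v)$ along $\partial\Theta$, legitimizes the swap.

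The task now reduces to evaluating the pointwise mean $\operatorname{\mathbb{E}}[\Phi_{n}^{\prime}(z)/(\Phi_{n}(z)-v)]$ for each fixed $z \in \partial\Theta$. I would set $X_{1} = \Phi_{n}^{\prime}(z)$ and $X_{2} = \Phi_{n}(z) - v$. Using $\operatorname{\mathbb{E}}[\eta_{j}] = 0$ and $\operatorname{\mathbb{E}}[\eta_{j}\overline{\eta_{k}}] = \delta_{jk}$ gives $\mu_{X_{1}} = 0$, $\mu_{X_{2}} = -v$, and
\begin{equation*}
\sigma_{X_{1}}^{2} = \operatorname{\mathbb{E}}[\lvert \Phi_{n}^{\prime}(z)\rvert^{2}] = K_{n}^{(1,1)}(z,z), \qquad \sigma_{X_{2}}^{2} = \operatorname{\mathbb{E}}[\lvert \Phi_{n}(z)\rvert^{2}] = K_{n}(z,z),
\end{equation*}
together with
\begin{equation*}
\rho\,\sigma_{X_{1}}\sigma_{X_{2}} = \operatorname{\mathbb{E}}\bigl[\Phi_{n}^{\prime}(z)\,\overline{\Phi_{n}(z)-v}\bigr] = \sum_{j=0}^{n}\phi_{j}^{\prime}(z)\overline{\phi_{j}(z)} = \overline{K_{n}^{(0,1)}(z,z)}.
\end{equation*}

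Before invoking Theorem \ref{Thm-1.1} I must check that the pseudo-covariance vanishes, as emphasized at the end of Section \ref{section_1}. Expanding $\operatorname{\mathbb{E}}[\eta_{j}\eta_{k}] = \operatorname{\mathbb{E}}[\alpha_{j}\alpha_{k}] - \operatorname{\mathbb{E}}[\beta_{j}\beta_{k}] + i\bigl(\operatorname{\mathbb{E}}[\alpha_{j}\beta_{k}] + \operatorname{\mathbb{E}}[\beta_{j}\alpha_{k}]\bigr)$, each term is zero by independence and standardization; hence $\operatorname{\mathbb{E}}[\Phi_{n}^{\prime}(z)\,\Phi_{n}(z)] = 0$ and the full pseudo-covariance matrix of $(X_{1},X_{2})$ is identically zero. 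Since $\mu_{X_{1}} = 0$ and $\mu_{X_{2}} \neq 0$ for generic $v$, formula \eqref{eq-1.2} collapses to
\begin{equation*}
\operatorname{\mathbb{E}}\!\left[\frac{X_{1}}{X_{2}}\right] = \frac{\rho\,\sigma_{X_{1}}}{\sigma_{X_{2}}}\exp\!\left(-\frac{\lvert \mu_{X_{2}}\rvert^{2}}{\sigma_{X_{2}}^{2}}\right) = \frac{\overline{K_{n}^{(0,1)}(z,z)}}{K_{n}(z,z)}\exp\!\left(-\frac{\lvert v\rvert^{2}}{K_{n}(z,z)}\right),
\end{equation*}
and the limiting form as $v \to 0$ agrees with corollary (a) of Theorem \ref{Thm-1.1}, yielding the statement by substitution into the contour integral.

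The main obstacle, in my view, is the rigorous interchange of expectation and contour integration. The integrand $\Phi_{n}^{\prime}/(\Phi_{n}-v)$ is a Cauchy-type meromorphic object whose singularities move randomly and can approach $\partial\Theta$, so a naive Fubini does not apply. Controlling this requires the holomorphic Jacobian picture to produce a joint density for $(z, \eta)$ with an integrable majorant on compact pieces of $\partial\Theta$, together with the observation that the pole locus of the integrand intersects $\partial\Theta$ only on a $\operatorname{Pr}$-null set. Once this technical swap is in place, the remaining computation is the clean algebraic identification above.
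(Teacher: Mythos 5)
Your reduction to Wu's formula is exactly the paper's route, and you carry it out correctly: with $X_{1} = \Phi_{n}^{\prime} (z)$ and $X_{2} = \Phi_{n} (z) - v$ you obtain $\mu_{X_{1}} = 0$, $\mu_{X_{2}} = -v$, covariance $\overline{K_{n}^{(0, 1)} (z, z)}$, variance $K_{n} (z, z)$, and a vanishing pseudo-covariance, after which \eqref{eq-1.2} collapses to the stated integrand. (One small wording slip: for $j = k$ the individual terms $\operatorname{\mathbb{E}} [\alpha_{j} \alpha_{j}]$ and $\operatorname{\mathbb{E}} [\beta_{j} \beta_{j}]$ are not zero --- each equals $\tfrac{1}{2}$ --- it is their difference inside $\operatorname{\mathbb{E}} [\eta_{j} \eta_{k}]$ that vanishes. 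Also, the extra coordinate retained in the coefficient-to-roots map should be the leading coefficient $\eta_{n}$, as in \eqref{eq-2.3}, not $\eta_{0}$; the Jacobian factor $\lvert \eta_{n} \rvert^{2 n}$ of Lemma \ref{Lemma-2.4} depends on that choice.)

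The genuine gap is the step you yourself flag as the main obstacle: the interchange of $\operatorname{\mathbb{E}}$ with the contour integral is asserted with the right strategy named, but no integrable majorant is ever produced. This is precisely the point the paper regards as its contribution beyond Shepp and Vanderbei, and Section \ref{section_3} devotes most of its length to it. Concretely, what is missing is: (i) the proof that the set of coefficient vectors for which some zero $\zeta_{j, \boldsymbol{\eta}}$ lies on $\partial D_{\varrho}$ is null --- obtained by pushing the volume forward through the zero coordinates with Jacobian $\lvert \eta_{n} \rvert^{2 n} \prod_{j < k} \lvert \zeta_{j, \boldsymbol{\eta}} - \zeta_{k, \boldsymbol{\eta}} \rvert^{2}$, which also settles measurability; and (ii) for zeros near the contour, the bound of $\int_{\partial D_{\varrho}} \lvert z - \zeta_{j, \boldsymbol{\eta}} \rvert^{-1} \lvert dz \rvert$ by an explicit logarithmic singularity $\log (1 / \lvert \varrho - \lvert \zeta_{j, \boldsymbol{\eta}} \rvert \rvert)$, followed by the verification that this singularity is integrable against the induced density of the zeros, using the exponential decay of that density at infinity against the polynomial growth of the Vandermonde factor. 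Without (i) and (ii) the identity \eqref{eq-3.1} is not established, because the poles of $\Phi_{n}^{\prime} / (\Phi_{n} - v)$ come arbitrarily close to $\partial \Theta$ on a set of positive probability, so the integrand is unbounded on $\partial \Theta \times \Omega$ and a naive Tonelli--Fubini appeal does not apply. Everything downstream of the interchange in your write-up is sound.
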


This relationship between $N_{\Theta}^{\Phi_{n}, v}$, $K_{n}$, and $K_{n}^{(k, l)}$ can be stated more explicitly as a real-valued double integral, which shows the precise dependency of $N_{\Theta}^{\Phi_{n}, v}$ with respect to $v$.

\begin{theorem} \label{Thm-1.3}
For each domain $\Theta \subset \mathbb{C}$,
\begin{equation*}
\operatorname{\mathbb{E}} [N_{\Theta}^{\Phi_{n}, v}]
 = \iint_{\Theta} \rho_{n, v} (z) \, dx \, dy,
\end{equation*}
where
\begin{equation*}
\rho_{n, v} (z)
 = \frac{1}{\pi} \left[\frac{K_{n}^{(1, 1)} (z, z)}{K_{n} (z, z)} - \frac{\lvert K_{n}^{(0, 1)} (z, z) \rvert^{2}}{K_{n} (z, z)^{2}} \left(1 - \frac{\lvert v \rvert^{2}}{K_{n} (z, z)}\right)\right] \exp \left(-\frac{\lvert v \rvert^{2}}{K_{n} (z, z)}\right).
\end{equation*}
\end{theorem}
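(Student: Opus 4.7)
The plan is to convert the contour integral of Theorem \ref{Thm-1.2} into an area integral over $\Theta$ via the complex form of Stokes' theorem (Green's theorem in the plane). For any smooth function $F(z,\overline{z})$ on a neighbourhood of $\overline{\Theta}$, one has $d(F\,dz) = \partial_{\overline z} F \cdot d\overline z \wedge dz = 2i\,\partial_{\overline z} F\,dx\,dy$, so that
\begin{equation*}
\frac{1}{2\pi i}\oint_{\partial\Theta} F(z,\overline z)\,dz
 = \frac{1}{\pi}\iint_{\Theta} \frac{\partial F}{\partial \overline z}\,dx\,dy.
\end{equation*}
Applying this to the integrand
\begin{equation*}
F(z,\overline z) = \frac{\overline{K_{n}^{(0,1)}(z,z)}}{K_{n}(z,z)}\,\exp\!\left(-\frac{|v|^{2}}{K_{n}(z,z)}\right)
\end{equation*}
from Theorem \ref{Thm-1.2} will yield the density $\rho_{n,v}$ of Theorem \ref{Thm-1.3}, provided the Wirtinger derivative $\partial_{\overline z} F$ is computed and simplified correctly.

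The key observation is that the diagonal kernel and its various derivatives along the diagonal are linked by explicit Wirtinger identities. Writing $K_{n}(z,z) = \sum_{j=0}^{n}\phi_{j}(z)\overline{\phi_{j}(z)}$ and noting that the $\phi_{j}$ are holomorphic, I would record
\begin{equation*}
\frac{\partial}{\partial z} K_{n}(z,z) = \sum_{j=0}^{n}\phi_{j}'(z)\overline{\phi_{j}(z)} = \overline{K_{n}^{(0,1)}(z,z)}, \qquad
\frac{\partial}{\partial \overline z} K_{n}(z,z) = K_{n}^{(0,1)}(z,z),
\end{equation*}
and, differentiating the function $z\mapsto \overline{K_{n}^{(0,1)}(z,z)} = \sum_{j}\phi_{j}'(z)\overline{\phi_{j}(z)}$ in $\overline z$,
\begin{equation*}
\frac{\partial}{\partial \overline z}\, \overline{K_{n}^{(0,1)}(z,z)} = \sum_{j=0}^{n}\phi_{j}'(z)\overline{\phi_{j}'(z)} = K_{n}^{(1,1)}(z,z).
\end{equation*}
With these three identities in hand, I would apply the quotient and chain rules to $F$, write $|K_{n}^{(0,1)}(z,z)|^{2} = \overline{K_{n}^{(0,1)}(z,z)}\cdot K_{n}^{(0,1)}(z,z)$, and collect the three terms that arise. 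Grouping the two terms that lack the factor $|v|^{2}/K_{n}(z,z)$ produces $K_{n}^{(1,1)}/K_{n} - |K_{n}^{(0,1)}|^{2}/K_{n}^{2}$, while the chain-rule term from differentiating the exponential contributes $|K_{n}^{(0,1)}|^{2}\,|v|^{2}/K_{n}^{3}$; combining these gives exactly the bracket in the stated formula, with a common factor $\exp(-|v|^{2}/K_{n}(z,z))$ left outside.

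The main conceptual hurdle is simply verifying the hypotheses for Green's theorem: the integrand must be smooth throughout $\overline{\Theta}$. This reduces to $K_{n}(z,z)>0$ everywhere, which is immediate because $K_{n}(z,z) = \sum_{j=0}^{n}|\phi_{j}(z)|^{2} \geq |\phi_{0}(z)|^{2} = 1$ under the normalisation $\phi_{0}\equiv 1$. Assuming $\partial\Theta$ is a sufficiently regular positively oriented contour (as is needed already for Theorem \ref{Thm-1.2}), the Stokes step is then rigorous, and the remainder is the routine but careful Wirtinger computation outlined above. No deep new ingredient is required beyond Theorem \ref{Thm-1.2} itself; the real content of Theorem \ref{Thm-1.3} is that the integrand in Theorem \ref{Thm-1.2} is a $\overline\partial$-primitive of an explicitly computable real-analytic density.
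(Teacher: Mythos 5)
Your proposal is correct and follows essentially the same route as the paper: both convert the contour integral of Theorem \ref{Thm-1.2} into an area integral via Green's theorem in the $\partial/\partial\overline{z}$ form and then compute the Wirtinger derivative of the integrand, which collapses to the stated bracket times the exponential. Your explicit recording of the diagonal-kernel identities and the positivity bound $K_{n}(z,z)\geq 1$ is a welcome touch of added care, but it is the same argument.
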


The function $\rho_{n, v}$ represents the mean number of complex roots of $\Phi_{n} (z) = v$ per unit area at the point $(x, y) \in \mathbb{R}^{2}$. Integrating $\rho_{n, v}$ over any domain $\Theta$ produces the mean number of complex roots in $\Theta$, the effect of which is that the ratio $\rho_{n, v} / \iint_{\Theta} \rho_{n, v} \, dx \, dy$ then is the probability density of a complex root. The formula for $\rho_{n, v}$ is reminiscent of the formula for the density of Theorem 1 of \cite{CorleyLedoan2020}. The latter was proved using the techniques for conditional random fields developed in \cite{AdlerTaylor2007, AzaisWschebor2009, IbragimovZeitouni1997}. It generalizes Edelman and Kostlan's formula (see Theorem 3.1 of \cite{EdelmanKostlan1995}) for the density of zeros of a random analytic function.

The following statements are straightforward:

\begin{corollary} \label{Cor-1.4}
The following limits hold: $\operatorname{\mathbb{E}} [N_{\Theta}^{\Phi_{n}, v}]$ and $\rho_{n, v}$ each approaches 0 when $\lvert v \rvert \to \infty$.
\end{corollary}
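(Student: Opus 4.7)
The plan is to exploit the Gaussian decay of the factor $\exp(-\lvert v \rvert^{2} / K_{n}(z, z))$ appearing in both formulas, and to observe that it dominates any polynomial growth in $\lvert v \rvert$. The key preliminary is that $K_{n}(z, z) = \sum_{j = 0}^{n} \lvert \phi_{j}(z) \rvert^{2} \geq \lvert \phi_{0}(z) \rvert^{2} = 1$ since $\phi_{0} \equiv 1$; hence the exponent is well-defined and nonpositive everywhere on $\mathbb{C}$, and the exponential is bounded above by $1$. On any compact set $K \subset \mathbb{C}$ --- in particular on $\partial \Theta$ when $\Theta$ is bounded --- the continuous function $K_{n}(z, z)$ attains a finite maximum $M_{K}$, so that $\exp(-\lvert v \rvert^{2} / K_{n}(z, z)) \leq \exp(-\lvert v \rvert^{2} / M_{K}) \to 0$ uniformly on $K$ as $\lvert v \rvert \to \infty$.

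For the density, I would regroup the formula from Theorem~\ref{Thm-1.3} as
\begin{equation*}
\rho_{n, v}(z) = \bigl[ A(z) + B(z) \, t(z) \bigr] e^{-t(z)}, \qquad t(z) = \frac{\lvert v \rvert^{2}}{K_{n}(z, z)},
\end{equation*}
where $A(z)$ and $B(z)$ are built from $K_{n}$, $K_{n}^{(0, 1)}$, and $K_{n}^{(1, 1)}$ and carry no $v$-dependence. Since both $e^{-t}$ and $t e^{-t}$ vanish as $t \to \infty$, and $K_{n}(z, z) \geq 1$ forces $t(z) \to \infty$ uniformly on compacts as $\lvert v \rvert \to \infty$, the pointwise claim $\rho_{n, v}(z) \to 0$ follows at once.

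For the mean, I would apply Theorem~\ref{Thm-1.2}: when $\partial \Theta$ is a compact contour, the ratio $\overline{K_{n}^{(0, 1)}(z, z)} / K_{n}(z, z)$ is bounded on $\partial \Theta$ (using $K_{n}(z, z) \geq 1$), and the exponential factor converges uniformly to $0$ there by the first paragraph; the contour integral therefore tends to $0$. Equivalently, one could use Theorem~\ref{Thm-1.3} and pass the limit inside the area integral by dominated convergence, with majorant $C (1 + \lvert v \rvert^{2}) \exp(-\lvert v \rvert^{2} / M)$ on a bounded subdomain. The only mild obstacle is the case of an unbounded $\Theta$: there one must exhibit an integrable envelope independent of $v$ to justify passing the limit inside the integral, but for bounded $\Theta$ --- the natural case --- the conclusion is immediate from the uniform decay established above.
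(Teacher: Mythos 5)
Your argument is correct and is exactly the ``straightforward'' verification the paper omits: since $\phi_{0} \equiv 1$ gives $K_{n}(z,z) \geq 1$, the exponent $t = \lvert v \rvert^{2}/K_{n}(z,z)$ tends to infinity uniformly on compact sets, and writing $\rho_{n,v} = [A + Bt]e^{-t}$ with $v$-free $A, B$ (equivalently, bounding the integrand of Theorem~\ref{Thm-1.2} on the compact contour $\partial\Theta$) settles both limits. One correction to your final remark: for unbounded $\Theta$ the issue is not merely justifying an interchange of limit and integral --- for $\Theta = \mathbb{C}$ one has $\operatorname{\mathbb{E}}[N_{\mathbb{C}}^{\Phi_{n},v}] = n$ identically (the equation $\Phi_{n}(z)=v$ has $n$ roots almost surely), so the limit statement is genuinely false there and no integrable envelope can exist; the restriction to bounded $\Theta$, implicit in the paper's contour-integral formulation, is essential rather than technical.
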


\begin{corollary} \label{Cor-1.5}
For each domain $\Theta \subset \mathbb{C}$,
\begin{equation*}
\operatorname{\mathbb{E}} [N_{\Theta}^{\Phi_{n}, 0}]
 = \frac{1}{2 \pi i} \oint_{\partial \Theta} \frac{\overline{K_{n}^{(0, 1)} (z, z)}}{K_{n} (z, z)} \, dz
 = \iint_{\Theta} \rho_{n, 0} (z) \, dx \, dy,
\end{equation*}
where
\begin{equation*}
\rho_{n, 0} (z)
 = \frac{K_{n} (z, z) K_{n}^{(1, 1)} (z, z) - \lvert K_{n}^{(0, 1)} (z, z) \rvert^{2}}{\pi K_{n} (z, z)^{2}}.
\end{equation*}
\end{corollary}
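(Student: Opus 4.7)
The plan is to obtain Corollary~\ref{Cor-1.5} as a direct specialization of Theorems~\ref{Thm-1.2} and~\ref{Thm-1.3} to the case $v = 0$. First I would take the contour-integral expression of Theorem~\ref{Thm-1.2} and substitute $v = 0$. Since then $\lvert v \rvert^{2} = 0$, the exponential factor $\exp(-\lvert v \rvert^{2}/K_{n}(z,z))$ collapses to $1$, and what remains is precisely the first equality of the corollary.

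Next I would substitute $v = 0$ into the integrand $\rho_{n,v}$ of Theorem~\ref{Thm-1.3}. The parenthesized term $(1 - \lvert v \rvert^{2}/K_{n}(z,z))$ reduces to $1$ and the outer exponential again reduces to $1$, yielding
\begin{equation*}
\rho_{n,0}(z)
 = \frac{1}{\pi}\left[\frac{K_{n}^{(1,1)}(z,z)}{K_{n}(z,z)} - \frac{\lvert K_{n}^{(0,1)}(z,z) \rvert^{2}}{K_{n}(z,z)^{2}}\right].
\end{equation*}
Placing both terms over the common denominator $\pi K_{n}(z,z)^{2}$ produces the stated closed form for $\rho_{n,0}$. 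The second equality in the corollary is then the $v = 0$ case of the area-integral representation in Theorem~\ref{Thm-1.3}.

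There is no genuine obstacle here: all of the analytic content---Cauchy's argument principle, Wu's ratio identity, and the conversion of the boundary contour integral into a planar area integral---is already packaged inside Theorems~\ref{Thm-1.2} and~\ref{Thm-1.3}, so Corollary~\ref{Cor-1.5} amounts to observing that the two $v$-dependent factors $\exp(-\lvert v \rvert^{2}/K_{n}(z,z))$ and $(1 - \lvert v \rvert^{2}/K_{n}(z,z))$ both become unity at $v = 0$. It is worth remarking that this $v = 0$ instance recovers the classical setting (the mean number of zeros of $\Phi_{n}$ itself) and that the kernel-determinant form of $\rho_{n,0}$ matches the shape of the Edelman--Kostlan density for random analytic functions referenced just above.
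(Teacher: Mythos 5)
Your proposal is correct and is exactly the route the paper intends: the corollary is stated there as a "straightforward" consequence, obtained by setting $v = 0$ in Theorems \ref{Thm-1.2} and \ref{Thm-1.3} so that the factors $\exp(-\lvert v\rvert^{2}/K_{n}(z,z))$ and $1 - \lvert v\rvert^{2}/K_{n}(z,z)$ both reduce to $1$, and then clearing denominators to reach the stated form of $\rho_{n,0}$.
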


It is worth noting that when $\phi_{j} (z) = z^{j}$ the density $\rho_{n, 0}$ becomes the complex analog of Kac's \cite{Kac1943} density. A more compact form was derived by Arnold \cite{Arnold1966} (see also Theorem 4.15 of \cite{Bharucha-ReidSambandham1986}). His result is a specialization of Theorem 1.1, part (ii), of \cite{Yeager2018} in the case where the $\phi_{j}$ are OPUC.

We mention that Rice's formula \cite{Rice1945, Rice1958} counts the mean number of crossings that a continuous-time stationary ergodic process per unit time crosses a fixed real level. For the special case of a polynomial Farahmand and Jahangiri \cite{FarahmandJahangiri1998} scrutinized the complex level crossings of the random algebraic equation $\sum_{j = 0}^{n} \eta_{j} g_{j} z^{j} = v$. The inclusion of the real constants $g_{j}$ allows for the coefficients to be nonidentically distributed. Therefore, if $N_{\Theta}^{\Phi_{n}, v}$ is interpreted as the number of complex $v$-level crossings in $\Theta$ of $\Phi_{n}$, then $\rho_{n, v}$ represents the mean number of crossings about the level $v$ per unit area at the point $(x, y) \in \mathbb{R}^{2}$.

The remainder of this paper is organized into eight sections. In Section \ref{section_2} we gather our principal tools that we require to be able to give a complete proof of Theorem \ref{Thm-1.2}. In Section \ref{section_3} we prove Theorem \ref{Thm-1.2}. In Section \ref{section_4} we prove Theorem \ref{Thm-1.3}. In Sections \ref{section_5} through \ref{section_8} we give some applications related to orthogonal polynomials. More precisely, we confine our attention to three different classes of OPUC for the basis functions: Nevai, Szeg\H{o}, and Sthal, Totik, and Ullman. The first question we consider is: What is the asymptotic formula for $\rho_{n, v}$? The next question is: What is the behavior of $\operatorname{\mathbb{E}} [N_{\Theta}^{\Phi_{n}, v}]$ in $\Theta \subseteq \mathbb{D}$ as $n \to \infty$? For some purposes it is convenient to establish analogous results with Bergman polynomials. In this connection we mention that in \cite{DiaconisEvans2001} the random polynomials spanned by Bergman polynomials on $\mathbb{D}$ are logarithmic derivatives of characteristic polynomials of random unitary or orthogonal matrices. In \cite{StahlTotik1992} Bergman polynomials are used as the basis functions for representing analytic functions on $\mathbb{D}$. Finally, in Section \ref{section_9} we give a numerical simulation of the zeros and $v$-level crossings based on the formulas for $\operatorname{\mathbb{E}} [N_{\Theta}^{\Phi_{n}, v}]$ and $\rho_{n, v}$ for a random polynomial $\Phi_{n}$ spanned by Bergman polynomials.

\section{Lemmata} \label{section_2}

A consequence of the fundamental theorem of algebra is that $P_{n} (z) = \sum_{j = 0}^{n} a_{j} z^{j} = a_{n} \prod_{j = 1}^{n} (z - z_{j})$ with $a_{j}, z_{j} \in \mathbb{C}$ and $a_{n} \neq 0$. Assume that $P_{n}$ is monic. Comparison of the representations shows that the dependence between the zeros and coefficients can be obtained in the form of Vi\`{e}te's formul$\ae$ (1.2.2) of \cite{RahmanSchmeisser2002} but with $a_{n} = 1$:
\begin{equation} \label{eq-2.1}
\arraycolsep=0pt\def\arraystretch{1}
\begin{array}{lcl}
z_{1} + z_{2} + z_{3} + \dotsb + z_{n} & = & - a_{n - 1}, \\ \addlinespace[1mm]
z_{1} z_{2} + z_{1} z_{3} + z_{2} z_{3} + \dotsb + z_{n - 1} z_{n} & = & \hspace{0.78em} a_{n - 2}, \\ \addlinespace[1mm]
z_{1} z_{2} z_{3} + z_{1} z_{2} z_{4} + z_{1} z_{3} z_{4} + \dotsb + z_{n - 2} z_{n - 1} z_{n} & = & - a_{n - 3}, \\ \addlinespace[1mm]
& \vdotswithin{=} & \\ \addlinespace[1mm]
z_{1} z_{2} z_{3} \dotsm z_{n} & = & (-1)^{n} a_{0}.
\end{array}
\end{equation}

\begin{definition}[Krantz \cite{Krantz1992}, Definition 1.4.9] \label{Def-2.1}
Let $\Lambda \subset \mathbb{C}^{n}$ be an open set, not necessarily connected, such that $\partial \Lambda = \partial (\mathbb{C}^{n} \backslash \overline{\Lambda})$. Let $\boldsymbol{f} = (f_{1}, \dotsc, f_{n}) \colon \Lambda \rightarrow \mathbb{C}^{n}$ be a holomorphic mapping. Write $w_{j} = f_{j} (z)$ for $j = 1, \dotsc, n$ to be the points $w_{j} \in \mathbb{C}^{n}$ assigned by $f_{j}$ to $z \in \Lambda$. Then the holomorphic Jacobian matrix of $\boldsymbol{f}$ is the matrix $J_{\mathbb{C}} \boldsymbol{f} = \partial (w_{1}, \dotsc, w_{n}) / \partial (z_{1}, \dotsc, z_{n})$. Write further $z_{j} = x_{j} + i y_{j}$ and $w_{k} = \beta_{k} + i \gamma_{k}$ for $j, k = 1, \dotsc, n$. Then the real Jacobian matrix of $\boldsymbol{f}$ is the matrix $J_{\mathbb{R}} \boldsymbol{f} = \partial (\beta_{1}, \gamma_{1}, \dotsc, \beta_{n}, \gamma_{n}) / \partial (x_{1}, y_{1}, \dotsc, x_{n}, y_{n})$.
\end{definition}

\begin{lemma}[Krantz \protect{\cite{Krantz1992}, Proposition 1.4.10}] \label{Lemma-2.2}
With notation as in Definition \ref{Def-2.1}, if $\boldsymbol{f}$ is a holomorphic mapping, then $\det J_{\mathbb{R}} \boldsymbol{f}  = \lvert \det J_{\mathbb{C}} \boldsymbol{f} \rvert^{2}$.
\end{lemma}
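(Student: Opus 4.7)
The plan is to reduce the identity to a linear-algebraic fact: for any $A \in M_{n}(\mathbb{C})$ with $2n \times 2n$ real matrix representation $A_{\mathbb{R}}$ under the canonical identification $\mathbb{C}^{n} \cong \mathbb{R}^{2n}$ given by $z_{j} = x_{j} + i y_{j}$, one has $\det A_{\mathbb{R}} = \lvert \det A \rvert^{2}$. This reduction is justified by Definition \ref{Def-2.1}: at each point of $\Lambda$, the real differential of a holomorphic $\boldsymbol{f}$ is exactly the real incarnation of the complex-linear map whose matrix is $J_{\mathbb{C}} \boldsymbol{f}$, so $J_{\mathbb{R}} \boldsymbol{f}$ coincides with $(J_{\mathbb{C}} \boldsymbol{f})_{\mathbb{R}}$ and the lemma is equivalent to the linear-algebraic assertion.

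To prove that assertion I would pass to Wirtinger coordinates. Introduce $(z_{1}, \overline{z_{1}}, \dotsc, z_{n}, \overline{z_{n}})$ on the source and $(w_{1}, \overline{w_{1}}, \dotsc, w_{n}, \overline{w_{n}})$ on the target via a common block-diagonal linear change of basis $T$ consisting of $n$ copies of the $2 \times 2$ block sending $(x, y)$ to $(x + i y, x - i y)$. Because the identical matrix $T$ appears on both sides, the transformed Jacobian equals $T \, J_{\mathbb{R}} \boldsymbol{f} \, T^{-1}$ and has the same determinant. Holomorphicity of $\boldsymbol{f}$ now yields $\partial w_{k} / \partial \overline{z_{j}} = 0$, and complex conjugation supplies $\partial \overline{w_{k}} / \partial z_{j} = 0$. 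After the same permutation is applied to rows and columns to group the holomorphic variables together and the antiholomorphic ones together—so that the permutation's sign cancels—the matrix becomes block diagonal with diagonal blocks $(\partial w_{k} / \partial z_{j})_{j,k} = J_{\mathbb{C}} \boldsymbol{f}$ and $(\partial \overline{w_{k}} / \partial \overline{z_{j}})_{j,k} = \overline{J_{\mathbb{C}} \boldsymbol{f}}$. The determinant is then $\det J_{\mathbb{C}} \boldsymbol{f} \cdot \overline{\det J_{\mathbb{C}} \boldsymbol{f}} = \lvert \det J_{\mathbb{C}} \boldsymbol{f} \rvert^{2}$.

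The main obstacle is purely bookkeeping: checking that the conjugation by $T$ and the grouping permutation each preserve the real determinant so that no spurious factor of $\det T = (-2i)^{n}$ or permutation sign survives. An elementary alternative that sidesteps the Wirtinger formalism is to write $J_{\mathbb{C}} \boldsymbol{f} = P + i Q$ with real $n \times n$ matrices $P, Q$, to recognize $J_{\mathbb{R}} \boldsymbol{f}$ under the coordinate ordering $(x_{1}, \dotsc, x_{n}, y_{1}, \dotsc, y_{n})$ as the block matrix $\begin{pmatrix} P & -Q \\ Q & P \end{pmatrix}$, and then to add $i$ times the second block row to the first and subtract $i$ times the first block column from the second. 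These two operations preserve the determinant and reduce the matrix to triangular form with diagonal blocks $P + i Q$ and $P - i Q$, directly exhibiting the determinant as $\det(P + i Q) \cdot \det(P - i Q) = \lvert \det J_{\mathbb{C}} \boldsymbol{f} \rvert^{2}$.
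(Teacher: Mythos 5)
Your proof is correct. Note that the paper itself offers no proof of this lemma: it is quoted verbatim from Krantz (Proposition 1.4.10) and used as a black box, so there is nothing in the paper to compare against line by line. Both of the arguments you sketch are valid and standard. The Wirtinger-coordinate version is essentially Krantz's own argument: conjugating $J_{\mathbb{R}}\boldsymbol{f}$ by the same change of basis $T$ on source and target preserves the determinant, holomorphicity kills the $\partial w_{k}/\partial \overline{z_{j}}$ entries, and the grouped matrix is block triangular with blocks $J_{\mathbb{C}}\boldsymbol{f}$ and $\overline{J_{\mathbb{C}}\boldsymbol{f}}$. Your elementary alternative also goes through: with $J_{\mathbb{C}}\boldsymbol{f} = P + iQ$ the Cauchy--Riemann equations give
\begin{equation*}
J_{\mathbb{R}}\boldsymbol{f}
 = \begin{pmatrix} P & -Q \\ Q & P \end{pmatrix}
\end{equation*}
up to a simultaneous row-and-column permutation (whose signs cancel), and your two block operations amount to multiplying on the left and right by unipotent block matrices of determinant $1$, yielding $\det(P + iQ)\det(P - iQ) = \lvert \det J_{\mathbb{C}}\boldsymbol{f} \rvert^{2}$ since $P$ and $Q$ are real. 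The only point worth making explicit in a final write-up is the one you already flag: the identification $J_{\mathbb{R}}\boldsymbol{f} = (J_{\mathbb{C}}\boldsymbol{f})_{\mathbb{R}}$ is exactly the statement that the real differential of a holomorphic map is complex-linear, i.e., it is where the Cauchy--Riemann equations enter.
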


If $\boldsymbol{f}$ is equidimensional, then, clearly, the matrices $J_{\mathbb{C}} \boldsymbol{f}$ and $J_{\mathbb{R}} \boldsymbol{f}$ are square. Using the present terminology, we have:

\begin{lemma} \label{Lemma-2.3}
For any $n$-tuple $z_{1}, \dotsc, z_{n}$ of arguments for the mapping $\psi \colon \mathbb{C}^{n} \rightarrow \mathbb{C}^{n}$ defined by \eqref{eq-2.1}, if $\psi (z_{1}, \dotsc, z_{n}) = (a_{n - 1}, \dotsc, a_{0})$, then $\det J_{\mathbb{R}} \psi (z_{1}, \dotsc, z_{n}) = \prod_{1 \leq j < k \leq n} \lvert z_{j} - z_{k} \rvert^{2}$.
\end{lemma}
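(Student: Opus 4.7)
The plan is to reduce the problem to a complex determinant calculation via Lemma \ref{Lemma-2.2} and then identify that determinant with a Vandermonde-type expression. Since \eqref{eq-2.1} identifies $\psi$ as the map sending the zeros of the monic polynomial $P(z)=\prod_{j=1}^{n}(z-z_{j})$ to its lower-order coefficients, $\psi$ is visibly holomorphic and equidimensional, so Lemma \ref{Lemma-2.2} gives
\begin{equation*}
\det J_{\mathbb{R}} \psi(z_{1},\dotsc,z_{n})
 = \lvert \det J_{\mathbb{C}} \psi(z_{1},\dotsc,z_{n}) \rvert^{2}.
\end{equation*}
Thus it suffices to show $\lvert \det J_{\mathbb{C}} \psi \rvert = \prod_{1 \leq j < k \leq n} \lvert z_{j} - z_{k} \rvert$.

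To compute $\det J_{\mathbb{C}} \psi$, I would differentiate the factored form of $P$ with respect to $z_{j}$ in two different ways. On the one hand,
\begin{equation*}
\frac{\partial P(z)}{\partial z_{j}}
 = -\prod_{l \neq j}(z - z_{l}).
\end{equation*}
On the other hand, since $P(z) = z^{n} + \sum_{k=0}^{n-1} a_{k} z^{k}$, we also have $\partial P(z)/\partial z_{j} = \sum_{k=0}^{n-1}(\partial a_{k}/\partial z_{j})\,z^{k}$. Equating and evaluating at $z = z_{i}$ yields $0$ when $i \neq j$ (because the factor $z_{i}-z_{i}$ appears on the right-hand side), and yields $-P'(z_{j})$ when $i=j$. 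In matrix form, if $V$ denotes the Vandermonde matrix $V_{ik} = z_{i}^{k}$ $(i=1,\dotsc,n,\ k=0,\dotsc,n-1)$ and $J$ denotes the Jacobian matrix with entries $J_{kj} = \partial a_{k}/\partial z_{j}$, then
\begin{equation*}
V J = \operatorname{diag}\bigl(-P'(z_{1}),\dotsc,-P'(z_{n})\bigr).
\end{equation*}

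Taking determinants and using the classical Vandermonde formula $\det V = \prod_{1 \leq i < l \leq n}(z_{l}-z_{i})$ together with $P'(z_{j}) = \prod_{l \neq j}(z_{j}-z_{l})$, I would solve for $\det J$. The right-hand side of the displayed equation evaluates to $(-1)^{n}\prod_{j=1}^{n}\prod_{l \neq j}(z_{j}-z_{l})$, and in this double product every unordered pair $\{j,l\}$ contributes $(z_{j}-z_{l})(z_{l}-z_{j}) = -(z_{j}-z_{l})^{2}$. After cancellation against the Vandermonde factor, one obtains $\det J = \pm \prod_{1 \leq j < l \leq n}(z_{j}-z_{l})$. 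The permutation that reorders $(a_{0},\dotsc,a_{n-1})$ into $(a_{n-1},\dotsc,a_{0})$ used in the statement of the lemma only introduces a further sign and is therefore irrelevant once we pass to $\lvert \det J_{\mathbb{C}} \psi \rvert^{2}$, yielding the claim.

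I do not expect serious obstacles here; the only things to track carefully are the two sign contributions (from $(-1)^{n}$ and from $(-1)^{n(n-1)/2}$ in the pair-wise expansion), both of which disappear upon squaring. Bookkeeping of indices between the Jacobian, the Vandermonde matrix, and the elementary symmetric functions is the only mildly delicate point; once the identity $V J = -\operatorname{diag}(P'(z_{i}))$ is established, the rest of the argument is purely algebraic.
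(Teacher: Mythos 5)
Your proposal is correct, and while it shares the paper's overall skeleton (reduce to the holomorphic Jacobian via Lemma \ref{Lemma-2.2}, then show $\det J_{\mathbb{C}}\psi$ equals the Vandermonde product up to sign), the way you compute that complex determinant is genuinely different. The paper works directly with the matrix $\mathbf{B}$ of partial derivatives of the elementary symmetric functions: it observes that $\det\mathbf{B}$ vanishes when two roots coincide (so the Vandermonde divides it), matches homogeneity degrees $\tfrac{1}{2}n(n-1)$, and then compares coefficients via the permutation expansion of the determinant --- a step that is only sketched. You instead differentiate the factored form of $P$ to get the matrix identity $VJ=\operatorname{diag}(-P'(z_{1}),\dotsc,-P'(z_{n}))$, and read off $\det J=\pm\prod_{j<l}(z_{j}-z_{l})$ from the classical Vandermonde determinant. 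This buys you an exact, fully rigorous evaluation (sign included, had you wanted it) with no coefficient-matching, at the cost of one small point you should make explicit: the division by $\det V$ requires the $z_{j}$ to be distinct, so you should note that both sides of the resulting formula for $\det J$ are polynomials agreeing on the dense open set of distinct roots, hence agree identically. Your handling of the reordering $(a_{0},\dotsc,a_{n-1})\mapsto(a_{n-1},\dotsc,a_{0})$ and of the signs $(-1)^{n}$ and $(-1)^{n(n-1)/2}$ is correct, and all of these indeed disappear upon taking $\lvert\cdot\rvert^{2}$.
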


\begin{proof}
The mapping $\psi$ is holomorphic on $\mathbb{C}^{n}$. The equations \eqref{eq-2.1} lead to the computation
\begin{equation} \label{eq-2.2}
\det J_{\mathbb{C}} \psi (z_{1}, \dotsc, z_{n})
 = (-1)^{n + 1} \det \mathbf{B},
\end{equation}
where
\begin{equation*}
\mathbf{B}
 = 
\begin{bmatrix*}
1 & 1 & \dotso & 1 \\ \addlinespace[1mm]
\displaystyle \sum_{j = 2}^{n} z_{j} & \displaystyle \sum_{\substack{j = 1 \\ j \neq 2}}^{n} z_{j} & \dotso & \displaystyle \sum_{j = 1}^{n - 1} z_{j} \\ \addlinespace[1mm]
\vdots & \vdots & \ddots & \vdots \\ \addlinespace[1mm]
\displaystyle \prod_{j = 2}^{n} z_{j} & \displaystyle \prod_{\substack{j = 1 \\ j \neq 2}}^{n} z_{j} & \dotso & \displaystyle \prod_{j = 1}^{n - 1} z_{j}
\end{bmatrix*}.
\end{equation*}
A curious aspect of the matrix $\mathbf{B}$ is that two of the columns have equal elements if and only if the two zeros $z_{j}$ and $z_{k}$ with $j \neq k$ are equal, and so its determinant equals zero. Hence the determinant of the holomorphic Jacobian matrix of $\psi$ contains the Vandermonde determinant of $z_{1}, \dotsc, z_{n}$ as a factor, namely, the product of all differences of the form $z_{j} - z_{k}$ for $j = 1, \dotsc, k$ and $k = 1, \dotsc, n$.

If we compute $\det \mathbf{B}$ by expanding along the first row, we get summands of the form that contains powers of products of $z_{1}, \dotsc, z_{n}$ which sum to $\frac{1}{2} n (n - 1)$. In this manner $\det \mathbf{B}$ and, accordingly, $\det J_{\mathbb{C}} \psi$ are homogeneous polynomials in the variables $z_{1}, \dotsc, z_{n}$ of degree $\frac{1}{2} n (n - 1)$. When we carry out the multiplication on $\prod_{1 \leq j < k \leq n} (z_{j} - z_{k})$, it becomes apparent that this product is a homogeneous polynomial in $z_{1}, \dotsc, z_{n}$ of the same degree. For summands of the form $z_{1} \dotsm z_{n}$ cancel off. These polynomials in fact are equal. It suffices, therefore, to show that they have all their coefficients in common.

Denote $\mathbf{B} = [b_{j k}]_{j, k = 1}^{n}$. By equation (8) in Section 2.1 of \cite{LancasterTismenetsky1985}, we see that $\det \mathbf{B}$ is the sum of $n!$ summands, one for each permutation $\pi$ in the symmetric group of degree $n$. Each summand equals $(-1)^{N (\pi)} b_{1 \pi (1)} \dotsm b_{n \pi (n)}$, where $N (\pi)$ is the number of inversions in $\pi$. One and only one of the elements $b_{1 \pi (1)}, \dotsc, b_{n \pi (n)}$ is taken from each row $j$ and each column $\pi (j)$. Then $\det \mathbf{B}$ and $\prod_{1 \leq j < k \leq n} (z_{j} - z_{k})$ have equal coefficients $(-1)^{k}$ for $k = 0, \dotsc, n$, of corresponding powers of $z_{1}, \dotsc, z_{n}$. Therefore, using \eqref{eq-2.2}, we obtain $\det J_{\mathbb{C}} \psi (z_{1}, \dotsc, z_{n}) = (-1)^{n + 1} \prod_{1 \leq j < k \leq n} (z_{j} - z_{k})$. An appeal to Lemma \ref{Lemma-2.2} completes the proof.
\end{proof}

Now we consider the general case of $Q_{n} (z) = \sum_{j = 0}^{n} \eta_{j} z^{j} - v$ with $\eta_{j} \in \mathbb{C}$ and zeros $\zeta_{1}, \dotsc, \zeta_{n}$. We apply Vi\`{e}te's formul$\ae$ and obtain
\begin{equation} \label{eq-2.3}
\arraycolsep=0pt\def\arraystretch{1}
\begin{array}{lcl}
\zeta_{1} + \zeta_{2} + \zeta_{3} + \dotsb + \zeta_{n} & = & \displaystyle - \frac{\eta_{n - 1}}{\eta_{n}}, \\ \addlinespace[1mm]
\zeta_{1} \zeta_{2} + \zeta_{1}\zeta_{3} + \zeta_{2}\zeta_{3} + \dotsb + \zeta_{n - 1} \zeta_{n} & = & \displaystyle \hspace{0.78em} \frac{\eta_{n - 2}}{\eta_{n}}, \\ \addlinespace[1mm]
\zeta_{1} \zeta_{2} \zeta_{3} + \zeta_{1} \zeta_{2} \zeta_{4} + \zeta_{1} \zeta_{3} \zeta_{4} + \dotsb + \zeta_{n - 2} \zeta_{n - 1} \zeta_{n} & = & \displaystyle - \frac{\eta_{n - 3}}{\eta_{n}}, \\ \addlinespace[1mm]
& \vdotswithin{=} & \\ \addlinespace[1mm]
\zeta_{1} \zeta_{2} \zeta_{3} \dotsm \zeta_{n} & = & \displaystyle (-1)^{n} \frac{\eta_{0} - v}{\eta_{n}}.
\end{array}
\end{equation}

\begin{lemma} \label{Lemma-2.4}
For any $n$-tuple $\zeta_{1}, \dotsc, \zeta_{n}$ of arguments for the mapping $\xi \colon \mathbb{C}^{n} \rightarrow \mathbb{C}^{n}$ defined by \eqref{eq-2.3}, if $\xi (\zeta_{1}, \dotsc, \zeta_{n}) = (\eta_{n - 1} / \eta_{n}, \dotsc, (\eta_{0} - v) / \eta_{n})$, then $\det J_{\mathbb{R}} \xi (\zeta_{1}, \dotsc, \zeta_{n}) = \lvert \eta_{n} \rvert^{2 n} \prod_{1 \leq j < k \leq n} \lvert \zeta_{j} - \zeta_{k} \rvert^{2}$.
\end{lemma}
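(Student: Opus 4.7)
The strategy is to reduce Lemma \ref{Lemma-2.4} to Lemma \ref{Lemma-2.3} by a simple scaling argument. Since $Q_{n}(z) = \eta_{n} \prod_{j = 1}^{n}(z - \zeta_{j})$, dividing through by $\eta_{n}$ produces a monic polynomial with the same zeros, and its coefficients are precisely the components of the map $\psi$ of Lemma \ref{Lemma-2.3} applied to $(\zeta_{1}, \ldots, \zeta_{n})$. Treating $\eta_{n}$ as a fixed nonzero complex parameter, Vi\`{e}te's formul\ae\ in \eqref{eq-2.3} then identify the outputs of $\xi$ with $\eta_{n}$ times the corresponding outputs of $\psi$, coordinate by coordinate (the shift by $-v$ in the constant term does not affect the Jacobian, being a translation of a single output coordinate).

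Because $\eta_{n}$ does not depend on the input variables $\zeta_{1}, \ldots, \zeta_{n}$, multiplying each component of $\psi$ by the scalar $\eta_{n}$ scales every row of the holomorphic Jacobian matrix by $\eta_{n}$, and by multilinearity of the determinant I would obtain
\begin{equation*}
\det J_{\mathbb{C}} \xi(\zeta_{1}, \ldots, \zeta_{n}) = \eta_{n}^{n} \det J_{\mathbb{C}} \psi(\zeta_{1}, \ldots, \zeta_{n}).
\end{equation*}
Since $\xi$ is holomorphic on $\mathbb{C}^{n}$, Lemma \ref{Lemma-2.2} then yields
\begin{equation*}
\det J_{\mathbb{R}} \xi = \lvert \det J_{\mathbb{C}} \xi \rvert^{2} = \lvert \eta_{n} \rvert^{2n} \lvert \det J_{\mathbb{C}} \psi \rvert^{2} = \lvert \eta_{n} \rvert^{2n} \det J_{\mathbb{R}} \psi,
\end{equation*}
after which Lemma \ref{Lemma-2.3} evaluates $\det J_{\mathbb{R}} \psi$ as $\prod_{1 \leq j < k \leq n} \lvert \zeta_{j} - \zeta_{k} \rvert^{2}$, giving the claimed formula.

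The only step requiring care is the bookkeeping of the scaling: one must verify from \eqref{eq-2.3} that each output of $\xi$ factors cleanly as $\eta_{n}$ times the corresponding elementary symmetric polynomial in $\zeta_{1}, \ldots, \zeta_{n}$, so that the factor $\eta_{n}$ can be extracted from each of the $n$ rows of $J_{\mathbb{C}} \xi$ uniformly. Once this row-by-row identification with $\psi$ is in hand, the rest is a routine combination of the multilinearity of the determinant with Lemmas \ref{Lemma-2.2} and \ref{Lemma-2.3}, and no further analytic ingredients are needed.
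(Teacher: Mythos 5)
Your proposal is correct and follows essentially the same route as the paper: the authors likewise treat $\eta_{n}$ as a fixed parameter, obtain $\det J_{\mathbb{C}} \xi = (-1)^{n+1} \eta_{n}^{n} \det \mathbf{B}$ by the row-scaling you describe, and then invoke Lemmas \ref{Lemma-2.2} and \ref{Lemma-2.3}. Your observation that the translation by $-v$ in the last coordinate does not affect the Jacobian is exactly the point that makes the reduction work.
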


\begin{proof}
The assertion is a direct consequence of Lemmata \ref{Lemma-2.2} and \ref{Lemma-2.3}, and an argument already used in the proof of Lemma \ref{Lemma-2.3} with \eqref{eq-2.2} replaced by $\det J_{\mathbb{C}} \xi (\zeta_{1}, \dotsc, \zeta_{n}) = (-1)^{n + 1} \eta_{n}^{n} \det \mathbf{B}$.
\end{proof}

\section{Proof of Theorem \ref{Thm-1.2}} \label{section_3}

Before embarking in the proof of Theorem \ref{Thm-1.2}, we mention that Shepp and Vanderbei's \cite{SheppVanderbei1995} explicit computation of the density and the mean number of zeros in $\Theta$ of $H_{n}$ was carried out without the necessary justification of the interchange of mean and contour integration, which they deemed to be ``tedious but doable.'' We do not think there is any indication anywhere of a solution to this intermediate problem. The present paper fills this gap. Now we prove Theorem \ref{Thm-1.2}.

\begin{proof}
Assume for simplicity that $\Phi_{n} - v$ has no zeros on $\Theta$. Since it is holomorphic within and on $\Theta$, by the argument principle (see Theorem 10.43 of \cite{Rudin1987}), we have
\begin{equation*}
N_{\Theta}^{\Phi_{n}, v}
 = \frac{1}{2 \pi i} \oint_{\partial \Theta} \frac{\Phi_{n}^{\prime} (z)}{\Phi_{n} (z) - v} \, dz.
\end{equation*}
The contour integral is to be evaluated over $\partial \Theta$ in the positive sense. We take the mean of both sides of this equation, apply the Tonelli--Fubini theorem (see Theorem 8.8 of \cite{Rudin1987}), and consequently obtain
\begin{equation} \label{eq-3.1}
\operatorname{\mathbb{E}} [N_{\Theta}^{\Phi_{n}, v}]
 = \frac{1}{2 \pi i} \oint_{\partial \Theta} \Psi_{n, v} (z) \, dz,
\end{equation}
where $\Psi_{n, v}$ is the complex-valued function
\begin{equation} \label{eq-3.2}
\Psi_{n, v} (z)
 = \operatorname{\mathbb{E}} \left[\frac{\Phi_{n}^{\prime} (z)}{\Phi_{n} (z) - v}\right].
\end{equation}
We justify the interchange of mean and contour integration in the following way:

Let
$\boldsymbol{\eta} = \begin{bmatrix*} \eta_{0} - v & \eta_{1} & \dotso & \eta_{n} \end{bmatrix*}^{T}$. Its complex normal probability density function is
\begin{equation*}
f (\boldsymbol{\eta})
 = \frac{1}{\pi^{n + 1}} \exp \left(-\lvert \eta_{0} - v \rvert^{2} - \sum_{j = 1}^{n} \, \lvert \eta_{j} \rvert^{2}\right).
\end{equation*}
It suffices to prove that $\lvert \Phi_{n}^{\prime} / (\Phi_{n} - v) \rvert$ is measurable and
\begin{equation} \label{eq-3.3}
\int_{\mathbb{R}^{2 n + 2}} \int_{\partial D_{\varrho}} \left\lvert \frac{\Phi_{n}^{\prime} (z)}{\Phi_{n} (z) - v}\right \rvert \left\lvert dz\right \rvert f (\boldsymbol{\eta}) \, dV
 < \infty,
\end{equation}
where $D_{\varrho} = \{z \in \mathbb{C} \colon \lvert z \rvert < \varrho\}$ is the open disk with radius $\varrho \in \mathbb{R}_{> 0}$ centered at the origin. Even though we consider $\Theta \subset \mathbb{C}$, seeing that $\operatorname{\mathbb{E}} [N_{\Theta}^{\Phi_{n}, v}]$ is a local property, it thus suffices in general to prove \eqref{eq-3.3} when we substitute $D_{\varrho}$ for $\Theta$. The argument can be broken down into two stages.

As a first step we prove \eqref{eq-3.3} for $\Phi_{n} (z) = \sum_{j = 0}^{n} \eta_{j} z^{j}$ with $\eta_{j} \in \mathbb{C}$. Let the zeros be $\zeta_{1, \boldsymbol{\eta}}, \dotsc, \zeta_{n, \boldsymbol{\eta}}$. We have
\begin{equation*}
\frac{\Phi_{n}^{\prime} (z)}{\Phi_{n} (z) - v}
 = \sum_{j = 1}^{n} \frac{1}{z - \zeta_{j, \boldsymbol{\eta}}}.
\end{equation*}
Thus, if
\begin{equation} \label{eq-3.4}
\int_{\mathbb{R}^{2 n + 2}} \int_{\partial D_{\varrho}} \left\lvert \frac{1}{z - \zeta_{j, \boldsymbol{\eta}}}\right \rvert \left\lvert dz\right \rvert f (\boldsymbol{\eta}) \, dV
 < \infty, \quad j = 0, \dotsc, n,
\end{equation}
then, clearly, \eqref{eq-3.3} would follow. And, if we consider a particular set $B \subset \mathbb{C}^{k}$, namely an infinitesimal hypercube centered at $\mathbf{y}_{0}$ of dimension $2 l$ on each side, so that $B = \{\mathbf{y} \in \mathbb{C}^{k} \colon \lvert \mathrm{y}_{1} - \mathrm{y}_{0 1} \rvert \leq l, \lvert \mathrm{y}_{2} - \mathrm{y}_{0 2} \rvert \leq l, \dotsc, \lvert \mathrm{y}_{k} - \mathrm{y}_{0 k} \rvert \leq l\}$, and let $V_{B}$ signify the volume of the hypercube in $k$ dimensions, $(2 l)^{k}$, and $dV_{k}$ be the volume measure in $\mathbb{C}^{k}$, we can write
\begin{equation*}
\int_{\mathbb{C}^{n + 1}} \int_{\partial D_{\varrho}} \left\lvert \frac{1}{z - \zeta_{j, \boldsymbol{\eta}}}\right \rvert \lvert dz \rvert \, f (\boldsymbol{\eta}) \, dV_{n + 1}
 = \int_{\mathbb{C}} \int_{\mathbb{C}^{n}} \int_{\partial D_{\varrho}} \left\lvert \frac{1}{z - \zeta_{j, \boldsymbol{\eta}}}\right \rvert \lvert dz \rvert \, f (\eta_{0} - v, \eta_{1}, \dotsc, \eta_{n - 1}) \, dV_{n} \, f (\eta_{n}) \, d\eta_{n}.
\end{equation*}
Now single out and fix $\eta_{n}$. We have to distinguish two cases.

In the first place, we may suppose that an arbitrary zero $\zeta_{j, \boldsymbol{\eta}}, \, j \in \{1, \dotsc, n\}$, lies on $\partial D_{\varrho}$. In order that \eqref{eq-3.4} may hold, the set $\{\boldsymbol{\eta} \in \mathbb{C}^{n} \colon \zeta_{j, \boldsymbol{\eta}} \in \partial D_{\varrho}\}$ must be negligible. Indeed, by Lemma \ref{Lemma-2.4}, we do have
\begin{equation*}
\begin{split}
V_{n} (\{\boldsymbol{\eta} \in \mathbb{C}^{n} \colon \zeta_{j, \boldsymbol{\eta}} \in \partial D_{\varrho}\})
 & = \int_{\xi (\{\zeta_{j, \boldsymbol{\eta}} \in \partial D_{\varrho}\})} dV_{n} \\
 & \leq \int_{(\partial D_{\varrho})^{n}} \lvert \eta_{n} \rvert^{2 n} \prod_{1 \leq j < k \leq n} \lvert \zeta_{j, \boldsymbol{\eta}} - \zeta_{k, \boldsymbol{\eta}} \rvert^{2} \, dV_{n}
 = 0,
\end{split}
\end{equation*}
where $(\partial D_{\varrho})^{n}$ denotes the product of $n$ circles with radius $\varrho$ centered at the origin, and thus we must have $V_{n + 1} (\{\boldsymbol{\eta} \in \mathbb{C}^{n} \colon \zeta_{j, \boldsymbol{\eta}} \in \partial D_{\varrho}\}) = 0$; hence, in particular, $\lvert \Phi_{n}^{\prime} / (\Phi_{n} - v) \rvert$ is measurable.

Secondly, we suppose that there are no zeros on $\partial D_{\varrho}$. If $\lvert z - \zeta_{j, \boldsymbol{\eta}} \rvert \geq 1$ whenever $z \in \partial D_{\varrho}$, then by direct inspection we know that
\begin{equation*}
\int_{\mathbb{R}^{2 n + 2}} \int_{\partial D_{\varrho}} \left\lvert \frac{1}{z - \zeta_{j, \boldsymbol{\eta}}}\right \rvert \lvert dz \rvert \, f (\boldsymbol{\eta}) \, dV
 \leq 2 \pi \varrho.
\end{equation*}
It is important to consider the case for which $0 < \lvert z - \zeta_{j, \boldsymbol{\eta}} \rvert < 1$ whenever $z \in \partial D_{\varrho}$. In that case, if $\zeta_{j, \boldsymbol{\eta}} \in D_{\varrho}$ should lie on the open ray $]0, \varrho[$, then we know that the maximum is reached by $\lvert 1 / (z - \zeta_{j, \boldsymbol{\eta}}) \rvert$ when $\arg z$ lies between $-\frac{1}{2} \pi$ and $\frac{1}{2} \pi$, with both endpoints  exclusive, since the real part of $z$ is positive. We also know that $\lvert z - \zeta_{j, \boldsymbol{\eta}} \rvert \geq \varrho$ when $\arg z$ rotates from $\frac{1}{2} \pi$ to $-\frac{1}{2} \pi$. We find without serious difficulty that
\begin{equation*}
\begin{split}
\int_{\partial D_{\varrho}} \left\lvert \frac{1}{z - \zeta_{j, \boldsymbol{\eta}}}\right \rvert \lvert dz \rvert
 & \leq \int_{-\frac{1}{2} \pi}^{\frac{1}{2} \pi} \left\lvert \frac{1}{z - \zeta_{j, \boldsymbol{\eta}}}\right \rvert \lvert dz \rvert + \pi \\
 & = \left(\int_{0}^{\varrho - \zeta_{j, \boldsymbol{\eta}}} + \int_{\varrho - \zeta_{j, \boldsymbol{\eta}}}^{\frac{1}{2} \pi}\right) \frac{2 \varrho}{\sqrt{(\varrho \cos t - \zeta_{j, \boldsymbol{\eta}})^{2} + (\varrho \sin t)^{2}}} \, dt + \pi \\
 & \leq \int_{0}^{\varrho - \zeta_{j, \boldsymbol{\eta}}} \frac{2 \varrho}{\sqrt{\varrho^{2} - 2 \varrho \zeta_{j, \boldsymbol{\eta}} + \zeta_{j, \boldsymbol{\eta}}^{2}}} \, dt + \int_{\varrho - \zeta_{j, \boldsymbol{\eta}}}^{\frac{1}{2} \pi} \frac{2 \varrho}{\sqrt{\varrho^{2} \sin^{2} t}} \, dt + \pi \\
 & \leq 2 \varrho + \pi \log \left(\frac{1}{\varrho - \zeta_{j, \boldsymbol{\eta}}}\right) + \pi \log \frac{\pi}{2} + \pi.
\end{split}
\end{equation*}
We may recall that $0 \leq \frac{2}{\pi} t \leq \sin t$ whenever $t \in [0, \frac{1}{2} \pi]$. In the penultimate step an upper estimation for the integral over $[\varrho - \zeta_{j, \boldsymbol{\eta}}, \frac{1}{2} \pi]$ is obtained with the aid of this fact.

We have now to show that
\begin{equation*}
\int_{\mathbb{R}^{2 n + 2}} \log \left(\frac{1}{\varrho - \zeta_{j, \boldsymbol{\eta}}}\right) f (\boldsymbol{\eta}) \, dV
 < \infty.
\end{equation*}
More careful reflection tells us that we have the rationale for replacing $\varrho - \zeta_{j, \boldsymbol{\eta}}$ with $\lvert \varrho - \lvert \zeta_{j, \boldsymbol{\eta}} \rvert \rvert$ from the conditions imposed on the zeros. By Lemma \ref{Lemma-2.4}, written in full,
\begin{equation} \label{eq-3.5}
\begin{split}
& \int_{\mathbb{R}^{2 n + 2}} \log \, \left\lvert \frac{1}{\varrho - \lvert \zeta_{j, \boldsymbol{\eta}} \rvert}\right \rvert f (\boldsymbol{\eta}) \, dV \\
 & \quad = \int_{\mathbb{R}^{2}} \int_{\mathbb{R}^{2 n}} \log \left\lvert \frac{1}{\varrho - \lvert \zeta_{j, \boldsymbol{\eta}} \rvert} \right\rvert f [\xi (\eta_{0} - v, \eta_{1}, \dotsc, \eta_{n - 1})] \, \lvert \eta_{n} \rvert^{2 n} \prod_{1 \leq k < l \leq n} \lvert \zeta_{k, \boldsymbol{\eta}} - \zeta_{l, \boldsymbol{\eta}} \rvert^{2} \, dV_{n} \, f (\eta_{n}) \, dV_{1},
\end{split}
\end{equation}
where
\begin{align*}
f [\xi (\eta_{0} - v, \eta_{1}, \dotsc, \eta_{n - 1})]
 & = f \left(-\eta_{n}\sum_{k = 1}^{n} \zeta_{k, \boldsymbol{\eta}}, \eta_{n} \sum_{k < l} \zeta_{k, \boldsymbol{\eta}} \zeta_{l, \boldsymbol{\eta}}, \dotsc, (-1)^{n} \eta_{n} \prod_{k = 1}^{n} \zeta_{k, \boldsymbol{\eta}}\right) \\
 & = \frac{1}{\pi^{n}} \exp \left(-\left\lvert \eta_{n} \sum_{k = 1}^{n} \zeta_{k, \boldsymbol{\eta}}\right \rvert^{2} - \left\lvert \eta_{n} \sum_{k < l} \zeta_{k, \boldsymbol{\eta}} \zeta_{l, \boldsymbol{\eta}}\right \rvert^{2} - \dotsb - \left\lvert \eta_{n} \prod_{k = 1}^{n} \zeta_{k, \boldsymbol{\eta}}\right \rvert^{2}\right) \numberthis  \label{eq-3.6} \\
 & \equiv g (\zeta_{1, \boldsymbol{\eta}}, \dotsc, \zeta_{n, \boldsymbol{\eta}}).
\end{align*}
We bound the integral
\begin{equation*}
\begin{split}
 & \int_{\mathbb{R}^{2}} \log \left\lvert \frac{1}{\varrho - \lvert \zeta_{j, \boldsymbol{\eta}} \rvert}\right \rvert g (\zeta_{1, \boldsymbol{\eta}}, \dotsc, \zeta_{n, \boldsymbol{\eta}}) \, dV_{1}
\end{split}
\end{equation*}
as follows:

If $r \geq \varrho + 1$, then $\lvert z - \zeta_{j, \boldsymbol{\eta}} \rvert \geq 1$ whenever $z \in \partial D_{\varrho}$. This subcase has already been treated.

Differently, if $r < \varrho + 1$, then through direct calculation and application of polar coordinates, with the integration of the zero $\zeta_{j, \boldsymbol{\eta}}$ to be extended over the closed ray $[0, \varrho + 1]$, we are thus led to
\begin{equation*}
\begin{split}
& \int_{\{\zeta_{j, \boldsymbol{\eta}} \colon 0 < \lvert z - \zeta_{j, \boldsymbol{\eta}} \rvert < 1, \, z \in \partial D_{\varrho}\}} \log \left\lvert \frac{1}{\varrho - \lvert \zeta_{j, \boldsymbol{\eta}} \rvert}\right \rvert g (\zeta_{1, \boldsymbol{\eta}}, \dotsc, \zeta_{j, \boldsymbol{\eta}}, \dotsc, \zeta_{n, \boldsymbol{\eta}}) \, dV_{1} \\
 & \quad = \left(\int_{0}^{2 \pi} \int_{0}^{\varrho} + \int_{0}^{2 \pi} \int_{\varrho}^{\varrho + 1}\right) \log \left\lvert \frac{1}{\varrho - r}\right \rvert g (\zeta_{1, \boldsymbol{\eta}}, \dotsc, r e^{i \theta}, \dotsc, \zeta_{n, \boldsymbol{\eta}}) \, r \, dr \, d\theta \\
 & \quad \leq 2 \pi \varrho^{2} (1 - \log \varrho) \max_{\boldsymbol{\eta} \in \{\boldsymbol{\mu} \in \mathbb{C}^{n} \colon \zeta_{k, \boldsymbol{\mu}} \in \overline{D_{\varrho}}, \, k = 1, \dotsc, n\}} g (\zeta_{1, \boldsymbol{\eta}}, \dotsc, \zeta_{n, \boldsymbol{\eta}}) \\
 & \quad \quad + 2 \pi (\varrho + 1) \max_{\boldsymbol{\eta} \in \{\boldsymbol{\mu} \in \mathbb{C}^{n} \colon \zeta_{k, \boldsymbol{\mu}} \in \overline{D_{\varrho + 1}} \backslash D_{\varrho}, \, k = 1, \dotsc, n\}} g (\zeta_{1, \boldsymbol{\eta}}, \dotsc, \zeta_{n, \boldsymbol{\eta}}).
\end{split}
\end{equation*}
The point to note is that, owing to the definition \eqref{eq-3.6} of $g$, the above two maximum values decay exponentially at infinity when each $\lvert \zeta_{k, \boldsymbol{\eta}} \rvert \to \infty, \, k = 1, \dotsc, n$.

We return to \eqref{eq-3.5}. Using the estimate $\lvert \zeta_{j, \boldsymbol{\eta}} \rvert < \varrho + 1$, we thus have
\begin{equation*}
\prod_{\substack{k = 1 \\ k \neq j}}^{n} \lvert \zeta_{j, \boldsymbol{\eta}} - \zeta_{k, \boldsymbol{\eta}} \rvert^{2}
 < \prod_{\substack{k = 1 \\ k \neq j}}^{n} (\varrho + 1 + \lvert \zeta_{k, \boldsymbol{\eta}} \rvert)^{2}
\end{equation*}
and at one stroke two finite and convergent integrals:
\begin{equation*}
\begin{split}
 & 2 \pi \varrho^{2} (1 - \log \varrho) \int_{\mathbb{R}^{2}} \int_{\mathbb{R}^{2 n - 2}} \lvert \eta_{n} \rvert^{2 n} \prod_{\substack{1 \leq k < l \leq n \\ k, l \neq j}} \lvert \zeta_{k, \boldsymbol{\eta}} - \zeta_{l, \boldsymbol{\eta}} \rvert^{2} \prod_{\substack{k = 1 \\ k \neq j}}^{n} (\varrho + 1 + \lvert \zeta_{k, \boldsymbol{\eta}} \rvert)^{2} \\ & \quad \times \max_{\boldsymbol{\eta} \in \{\boldsymbol{\mu} \in \mathbb{C}^{n} \colon \zeta_{k, \boldsymbol{\mu}} \in \overline{D_{\varrho}}, \, k = 1, \dotsc, n\}} g (\zeta_{1, \boldsymbol{\eta}}, \dotsc, \zeta_{n, \boldsymbol{\eta}}) \, f (\eta_{n}) \, dV_{n - 1} \, dV_{1}
\end{split}
\end{equation*}
and
\begin{equation*}
\begin{split}
 & 2 \pi (\varrho + 1) \int_{\mathbb{R}^{2}} \int_{\mathbb{R}^{2 n - 2}} \lvert \eta_{n} \rvert^{2 n} \prod_{\substack{1 \leq k < l \leq n \\ k, l \neq j}} \lvert \zeta_{k, \boldsymbol{\eta}} - \zeta_{l, \boldsymbol{\eta}} \rvert^{2} \prod_{\substack{k = 1 \\ k \neq j}}^{n} (\varrho + 1 + \lvert \zeta_{k, \boldsymbol{\eta}} \rvert)^{2} \\ & \quad \times \max_{\boldsymbol{\eta} \in \{\boldsymbol{\mu} \in \mathbb{C}^{n} \colon \zeta_{k, \boldsymbol{\mu}} \in \overline{D_{\varrho + 1}} \backslash D_{\varrho}, \, k = 1, \dotsc, n\}} g (\zeta_{1, \boldsymbol{\eta}}, \dotsc, \zeta_{n, \boldsymbol{\eta}}) \, f (\eta_{n}) \, dV_{n - 1} \, dV_{1}.
\end{split}
\end{equation*}
Thus the condition \eqref{eq-3.4} is fulfilled.

We start over again, let
$\Phi_{n} (z) = \sum_{j = 0}^{n} \tau_{j} \phi_{j} (z)$, where the $\tau_{j}$ are independent and identically distributed standard complex normal random variables, $\phi_{j} (z) = \sum_{k = 0}^{j} a_{j, k} z^{k}$ with $a_{j, k} \in \mathbb{C}$ for $k = 0, \dotsc, j$ and $j = 1, \dotsc, n$, and $\phi_{0} = 1$, and immediately observe that
\begin{equation*}
\begin{split}
\Phi_{n} (z) - v
 & = \tau_{n}a_{n, n} z^{n} \\
 & \quad + (\tau_{n} a_{n, n - 1} + \tau_{n - 1} a_{n - 1, n - 1}) z^{n - 1} \\
 & \quad + (\tau_{n}a_{n, n - 2} + \tau_{n - 1} a_{n - 1, n - 2} + \tau_{n - 2} a_{n - 2, n - 2}) z^{n - 2} \\
 & \quad \vdotswithin{=} \\
 & \quad + \tau_{n} a_{n, 0} + \tau_{n - 1} a_{n - 1, 0} + \dotsb + \tau_{0} - v.
\end{split}
\end{equation*}
If we can introduce, therefore, the mapping $\phi$ defined by the relationships
\begin{equation*}
\arraycolsep=0pt\def\arraystretch{1}
\begin{array}{lcl}
\tau_{n} a_{n, n} & = & \eta_{n}, \\ \addlinespace[1mm]
\tau_{n} a_{n, n -1} + \tau_{n - 1} a_{n - 1, n - 1} & = & \eta_{n - 1}, \\ \addlinespace[1mm]
\tau_{n} a_{n, n - 2} + \tau_{n - 1} a_{n - 1, n - 2} + \tau_{n - 2} a_{n - 2, n - 2} & = & \eta_{n - 2}, \\ \addlinespace[1mm]
& \vdotswithin{=} & \\ \addlinespace[1mm]
\tau_{n} a_{n, 0} + \tau_{n - 1} a_{n - 1, 0} + \dotsb + \tau_{0}- v & = & \eta_{0},
\end{array}
\end{equation*}
say, and set
\begin{equation*}
\boldsymbol{\tau}
 = 
\begin{bmatrix*}
\tau_{n} a_{n, n} \\ \addlinespace[1mm]
\tau_{n} a_{n, n -1} + \tau_{n - 1} a_{n - 1, n - 1} \\ \addlinespace[1mm]
\vdots \\ \addlinespace[1mm]
\tau_{n} a_{n, 0} + \tau_{n - 1} a_{n - 1, 0} + \dotsb + \tau_{0} - v
\end{bmatrix*},
\end{equation*}
so that the expression $\phi (\boldsymbol{\tau}) = \begin{bmatrix*} \eta_{n} & \eta_{n - 1} & \dotso & \eta_{0} \end{bmatrix*}^{T} \equiv \boldsymbol{\widetilde{\eta}}$ satisfies
$J_{\mathbb{R}} \phi = \prod_{j = 1}^{n} \lvert a_{j, j} \rvert^{2}$, then we can proceed in complete analogy with the manner in which we argued above and apply Lemma \ref{Lemma-2.4} in order to show that
\begin{equation*}
\int_{\phi (\mathbb{R}^{2 n + 2})} \int_{\partial \Theta} \left\lvert \frac{\Phi_{n}^{\prime} (z)}{\Phi_{n} (z) - v}\right \rvert \lvert dz \rvert \, f (\boldsymbol{\tau}) \, dV
 = \prod_{j = 1}^{n} \lvert a_{j, j} \rvert^{2} \int_{\mathbb{R}^{2 n + 2}} \int_{\partial \Theta} \left\lvert \frac{\Phi_{n}^{\prime} (z)}{\Phi_{n} (z) - v}\right \rvert \lvert dz \rvert \, f (\boldsymbol{\widetilde{\eta}}) \, dV
 < \infty.
\end{equation*}
Accordingly we can conclude that $\lvert \Phi_{n}^{\prime} / (\Phi_{n} - v) \rvert$ is measurable.

Continuing the proof of the theorem we introduce a new two-dimensional complex-valued random variable vector $\mathbf{Z}$ by
$\mathbf{Z} = \begin{bmatrix*} \Phi_{n}^{\prime} (z) & \Phi_{n} (z) - v \end{bmatrix*}^{T}$. Observe that it has a nonsingular complex bivariate normal distribution completely described through the complex normal probability density function
\begin{equation*}
f (\mathbf{Z} ; \boldsymbol{\upmu}_{\mathbf{Z}}, \boldsymbol{\Sigma}_{\mathbf{Z} \mathbf{Z}})
 = \frac{1}{\pi^{2} \det \boldsymbol{\Sigma}_{\mathbf{Z} \mathbf{Z}}} \exp [-(\mathbf{Z} - \boldsymbol{\upmu}_{\mathbf{Z}})^{H} \boldsymbol{\Sigma}_{\mathbf{Z} \mathbf{Z}}^{-1} (\mathbf{Z} - \boldsymbol{\upmu}_{\mathbf{Z}})].
\end{equation*}
The parameters mean $\boldsymbol{\upmu}_{\mathbf{Z}}$ and covariance matrix $\boldsymbol{\Sigma}_{\mathbf{Z} \mathbf{Z}} = \operatorname{cov} [\mathbf{Z}, \mathbf{Z}]$ satisfy
\begin{equation*}
\boldsymbol{\upmu}_{\mathbf{Z}}
 = \operatorname{\mathbb{E}} [\mathbf{Z}]
 = 
\begin{bmatrix*}
 0 \\ \addlinespace[1mm]
 -v
\end{bmatrix*}
\end{equation*}
and
\begin{equation*}
\boldsymbol{\Sigma}_{\mathbf{Z} \mathbf{Z}}
 = \operatorname{\mathbb{E}} [(\mathbf{Z} - \boldsymbol{\upmu}_{\mathbf{Z}}) (\mathbf{Z} - \boldsymbol{\upmu}_{\mathbf{Z}})^{H}]
 = \operatorname{\mathbb{E}} 
\begin{bmatrix*}
\Phi_{n}^{\prime} (z) \overline{\Phi_{n}^{\prime} (z)} & \Phi_{n}^{\prime} (z) \overline{\Phi_{n} (z)} \\ \addlinespace[1mm]
\overline{\Phi_{n}^{\prime} (z)} \Phi_{n} (z) & \Phi_{n} (z) \overline{\Phi_{n} (z)}
\end{bmatrix*},
\end{equation*}
where $\mathbf{A}^{H} = \overline{\mathbf{A}^{T}}$ denotes the Hermitian conjugate for a complex matrix $\mathbf{A}$. We apply Theorem \ref{Thm-1.1} to $\Psi_{n, v}$ in the special case where $\mathbf{Z}$ has a $\mathcal{C} \mathcal{N} (\boldsymbol{\upmu}_{\mathbf{Z}}, \boldsymbol{\Sigma}_{\mathbf{Z} \mathbf{Z}})$ distribution. As regards the pseudo-covariance matrix $\boldsymbol{\Sigma}_{\mathbf{Z} \overline{\mathbf{Z}}} = \operatorname{cov} [\mathbf{Z}, \overline{\mathbf{Z}}]$, we show that it is zero. This owes to the fact that the equivalent representation of $\mathbf{Z}$ as the real-valued random variable vector
\begin{equation*}
\begin{bmatrix*}
\begin{bmatrix*}
\operatorname{Re} [\Phi_{n}^{\prime} (z)] & \operatorname{Re} [\Phi_{n} (z) - v]
\end{bmatrix*}^{T} &
\begin{bmatrix*}
\operatorname{Im} [\Phi_{n}^{\prime} (z)] & \operatorname{Im} [\Phi_{n} (z) - v]
\end{bmatrix*}^{T}
\end{bmatrix*}^{T}
\end{equation*}
fulfills the conditions
\begin{equation*}
\operatorname{cov} [\operatorname{Re} [\Phi_{n}^{\prime} (z)], \operatorname{Re} [\Phi_{n} (z) - v]]
 = \operatorname{cov} [\operatorname{Im} [\Phi_{n}^{\prime} (z)], \operatorname{Im} [\Phi_{n} (z) - v]]
\end{equation*}
and
\begin{equation*}
\operatorname{cov} [\operatorname{Re} [\Phi_{n}^{\prime} (z)], \operatorname{Im} [\Phi_{n} (z) - v]]
 = -\operatorname{cov} [\operatorname{Im} [\Phi_{n}^{\prime} (z)], \operatorname{Re} [\Phi_{n} (z) - v]].
\end{equation*}
We compute the matrices $\boldsymbol{\Sigma}_{\mathbf{Z} \mathbf{Z}}$ and $\boldsymbol{\Sigma}_{\mathbf{Z} \overline{\mathbf{Z}}}$ as follows:

We have
$\eta_{j} \overline{\eta_{k}} = \alpha_{j} \alpha_{k} + \beta_{j} \beta_{k} - i (\alpha_{j} \beta_{k} - \beta_{j} \alpha_{k})$ for $j, k = 0, \dotsc, n$. Since $\alpha_{j}, \beta_{j} \sim \mathcal{N} (0, \frac{1}{2})$, we have the condition $\operatorname{\mathbb{E}} [\alpha_{j} \alpha_{k}] = \operatorname{\mathbb{E}} [\beta_{j} \beta_{k}] = \frac{1}{2}$ if $j = k$, but $\operatorname{\mathbb{E}} [\alpha_{j} \alpha_{k}] = \operatorname{\mathbb{E}} [\beta_{j} \beta_{k}] = 0$ if $j \neq k$. Notice that if $j = k$, then $\operatorname{\mathbb{E}} [\eta_{j} \overline{\eta_{k}}] = 1$. Straightforward calculation shows that
\begin{equation*}
\begin{split}
\operatorname{\mathbb{E}} [\Phi_{n}^{\prime} (z) \overline{\Phi_{n}^{\prime} (z)}]
 & = \operatorname{\mathbb{E}} \left[\sum_{j = 0}^{n} \sum_{k = 0}^{n} \eta_{j} \overline{\eta_{k}} \phi_{j}^{\prime} (z) \overline{\phi_{k}^{\prime} (z)}\right]
 = \sum_{j = 0}^{n} \phi_{j}^{\prime} (z) \overline{\phi_{j}^{\prime} (z)}, \\ \addlinespace[1mm]
\operatorname{\mathbb{E}} [\Phi_{n}^{\prime} (z) \overline{\Phi_{n} (z)}]
 & = \operatorname{\mathbb{E}} \left[\sum_{j = 0}^{n} \sum_{k = 0}^{n} \eta_{j} \overline{\eta_{k}} \phi_{j}^{\prime} (z) \overline{\phi_{k} (z)}\right]
 = \sum_{j = 0}^{n} \phi_{j}^{\prime} (z) \overline{\phi_{j} (z)}, \\ \addlinespace[1mm]
\operatorname{\mathbb{E}} [\Phi_{n} (z) \overline{\Phi_{n} (z)}]
 & = \operatorname{\mathbb{E}} \left[\sum_{j = 0}^{n} \sum_{k = 0}^{n} \eta_{j} \overline{\eta_{k}} \phi_{j} (z) \overline{\phi_{k} (z)}\right]
 = \sum_{j = 0}^{n} \phi_{j} (z) \overline{\phi_{j} (z)},
\end{split}
\end{equation*}
and
\begin{equation*}
\operatorname{\mathbb{E}} [\overline{\Phi_{n}^{\prime} (z)} \Phi_{n} (z)]
 = \operatorname{\mathbb{E}} [\overline{\Phi_{n}^{\prime} (z) \overline{\Phi_{n} (z)}}]
 = \sum_{j = 0}^{n} \overline{\phi_{j}^{\prime} (z)} \phi_{j} (z).
\end{equation*}
Altogether, we see that
\begin{equation*}
\boldsymbol{\Sigma}_{\mathbf{Z} \mathbf{Z}}
 = 
\begin{bmatrix*}
\displaystyle \sum_{j = 0}^{n} \phi_{j}^{\prime} (z) \overline{\phi_{j}^{\prime} (z)} & \displaystyle \sum_{j = 0}^{n} \phi_{j}^{\prime} (z) \overline{\phi_{j} (z)} \\ \addlinespace[1mm]
\displaystyle \sum_{j = 0}^{n} \overline{\phi_{j}^{\prime} (z)} \phi_{j} (z) & \displaystyle \sum_{j = 0}^{n} \phi_{j} (z) \overline{\phi_{j} (z)}
\end{bmatrix*}.
\end{equation*}

Next, we obtain
\begin{equation*}
\begin{split}
\boldsymbol{\Sigma}_{\mathbf{Z} \overline{\mathbf{Z}}}
 = \operatorname{\mathbb{E}} [(\mathbf{Z} - \boldsymbol{\upmu}_{\mathbf{Z}}) (\mathbf{Z} - \boldsymbol{\upmu}_{\mathbf{Z}})^{T}]
 & = \operatorname{\mathbb{E}} 
\begin{bmatrix*}
\Phi_{n}^{\prime} (z) \Phi_{n}^{\prime} (z) & \Phi_{n}^{\prime} (z) \Phi_{n} (z) \\ \addlinespace[1mm]
\Phi_{n}^{\prime} (z) \Phi_{n} (z) & \Phi_{n} (z) \Phi_{n} (z)
\end{bmatrix*}.
\end{split}
\end{equation*}
We treat $\boldsymbol{\Sigma}_{\mathbf{Z} \overline{\mathbf{Z}}}$ as we did $\boldsymbol{\Sigma}_{\mathbf{Z} \mathbf{Z}}$. We have
$\eta_{j} \eta_{k} = \alpha_{j} \alpha_{k} - \beta_{j} \beta_{k} + i (\alpha_{j} \beta_{k} + \beta_{j} \alpha_{k})$ for $j, k = 0, \dotsc, n$, and so $\operatorname{\mathbb{E}} [\eta_{j} \eta_{k}] = 0$ if $j = k$. Then
\begin{equation*}
\boldsymbol{\Sigma}_{\mathbf{Z} \overline{\mathbf{Z}}}
 = 
\begin{bmatrix*}
0 & 0 \\ \addlinespace[1mm]
0 & 0
\end{bmatrix*}
\end{equation*}
follows in a like manner.

By Theorem \ref{Thm-1.1}, from \eqref{eq-1.2} (with $\rho = 1$) and \eqref{eq-3.2}, we find
\begin{align*}
\Psi_{n, v} (z)
 & = \frac{\operatorname{\mathbb{E}} [\Phi_{n}^{\prime} (z)]}{\operatorname{\mathbb{E}} [\Phi_{n} (z) - v]} + \left(\frac{\operatorname{\mathbb{E}} [\Phi_{n}^{\prime} (z) \overline{\Phi_{n} (z)}]}{\operatorname{\mathbb{E}} [\Phi_{n} (z) \overline{\Phi_{n} (z)}]} - \frac{\operatorname{\mathbb{E}} [\Phi_{n}^{\prime} (z)]}{\operatorname{\mathbb{E}} [\Phi_{n} (z) - v]}\right) \exp \left(-\frac{\lvert \operatorname{\mathbb{E}} [\Phi_{n} (z) - v] \rvert^{2}}{\operatorname{\mathbb{E}} [\Phi_{n} (z) \overline{\Phi_{n} (z)}]}\right) \\
 & = \frac{\displaystyle \sum_{j = 0}^{n} \overline{\phi_{j} (z)} \phi_{j}^{\prime} (z)}{\displaystyle \sum_{j = 0}^{n} \phi_{j} (z) \overline{\phi_{j} (z)}} \exp \left[-\lvert v \rvert^{2} \left(\displaystyle \sum_{j = 0}^{n} \phi_{j} (z) \overline{\phi_{j} (z)}\right)^{-1}\right]. \numberthis \label{eq-3.7}
\end{align*}
Writing \eqref{eq-3.7} in terms of the kernels $K_{n}$ and $K_{n}^{(0, 1)}$, and then substituting in \eqref{eq-3.1}, Theorem \ref{Thm-1.2} can be thereby obtained.
\end{proof}

\section{Proof of Theorem \ref{Thm-1.3}} \label{section_4}

On the basis of the argument in the preceding section, it will be easy now to prove Theorem \ref{Thm-1.3}.

\begin{proof}
To enunciate the point that $\Psi_{n, v}$ is a function of $z$ and $\overline{z}$ we now write $\Psi_{n, v} (z, \overline{z})$. From Stokes's theorem in the plane, better known as Green's theorem, when expressed in terms of the partial derivative $\partial / \partial \overline{z}$, it is asserted that
\begin{equation*}
\oint_{\partial \Gamma} u (z, \overline{z}) \, dz
 = 2 i \iint_{\Gamma} \frac{\partial}{\partial \overline{z}} [u (z, \overline{z})] \, dx \, dy
\end{equation*}
for every smooth bounded open domain $\Gamma \subset \mathbb{C}$ with positively oriented boundary $\partial \Gamma$ and complex-valued function $u  (z, \overline{z}) \in C^{1} (\overline{\Gamma})$ (see equation (2.37) of \cite{Taylor2011}). In fact, if in the formula we change $u  (z, \overline{z})$ to $\Psi_{n, v}  (z, \overline{z})$ and choose $\Theta$ as $\Gamma$, there results
\begin{equation} \label{eq-4.1}
\operatorname{\mathbb{E}} [N_{\Theta}^{\Phi_{n}, v}]
 = \frac{1}{\pi} \iint_{\Theta} \frac{\partial}{\partial \overline{z}} [\Psi_{n, v} (z, \overline{z})] \, dx \, dy.
\end{equation}
A direct calculation using \eqref{eq-3.7} furnishes
\begin{equation*}
\begin{split}
\frac{\partial}{\partial \overline{z}} \left[\Psi_{n, v} (z, \overline{z})\right]
 & = \frac{1}{\displaystyle \sum_{j = 0}^{n} \phi_{j} (z) \overline{\phi_{j} (z)}} \exp \left[- \lvert v \rvert^{2} \left(\displaystyle \sum_{j = 0}^{n} \phi_{j} (z) \overline{\phi_{j} (z)}\right)^{-1}\right] \\ & \quad \times \left\{\sum_{j = 0}^{n} \phi_{j}^{\prime} (z) \overline{\phi_{j}^{\prime} (z)} - \frac{\displaystyle \left|\sum_{j = 0}^{n} \phi_{j} (z) \overline{\phi_{j}^{\prime} (z)}\right|^{2}}{\displaystyle \sum_{j = 0}^{n} \phi_{j} (z) \overline{\phi_{j} (z)}} \left[1 - \lvert v \rvert^{2} \left(\displaystyle \sum_{j = 0}^{n} \phi_{j} (z) \overline{\phi_{j} (z)}\right)^{-1}\right]\right\}.
\end{split}
\end{equation*}
Expressing this in terms of the kernels $K_{n}, K_{n}^{(0, 1)}$, and $K_{n}^{(1, 1)}$, and next substituting in \eqref{eq-4.1}, after regrouping the terms, with a little algebra we will have, then, produced Theorem \ref{Thm-1.3}.
\end{proof}

\section{Nevai class} \label{section_5}

Let $\{\varphi_{j}\}_{j = 0}^{\infty}$ be OPUC with respect to a probability Borel measure $\mu$ on $\mathbb{T}$ and satisfy the orthogonality condition
\begin{equation} \label{eq-5.1}
\int_{\mathbb{T}} \varphi_{j} (e^{i \theta}) \overline{\varphi_{k} (e^{i \theta})} \, d \mu (e^{i \theta})
 = \delta_{j k}, \quad j, k \in \mathbb{N} \cup \{0\},
\end{equation}
where $\delta_{j k}$ denotes the Kronecker delta. OPUC are a direct generalization of the monomials $\{z^{j}\}_{j = 0}^{\infty}$, which have $d \mu (\theta) = \frac{1}{2 \pi} d \theta$. If $\{\varphi_{j}\}_{j = 0}^{\infty}$ heeds the Nevai class \cite{Nevai1979} and is locally uniformly whenever $z \in \mathbb{D}$, then
\begin{equation} \label{eq-5.2}
\lim_{n \to \infty} \frac{\varphi_{n}(z)}{\varphi_{n}^{\ast} (z)}
 = 0,
\end{equation}
where $\varphi_{n}^{\ast} (z) = z^{n} \overline{\varphi_{n} (1 / \overline{z})}$. A useful deduction of Theorem \ref{Thm-1.3} is the asymptotic behavior of $\rho_{n, v}$ on $\mathbb{D}$. The analysis can be carried out using the Christoffel--Darboux kernel
\begin{equation*}
\sum_{j = 0}^{n} \varphi_{j} (z) \overline{\varphi_{j} (w)}
 = \frac{\overline{\varphi_{n + 1}^{\ast} (w	)} \, \varphi_{n + 1}^{\ast} (z) - \overline{\varphi_{n + 1} (w)} \, \varphi_{n+ 1} (z)}{1 - \overline{w}z},
\end{equation*}
whenever $z, w \in \mathbb{C}$ with $\overline{w} z \neq 1$ (see Theorem 2.2.7 of \cite{Simon2005}). The following asymptotic formula for $\rho_{n, v}$ proves to be well-defined, since all the zeros of $\varphi_{n + 1}^{\ast}$ occur in $\mathbb{C} \backslash \mathbb{D}$ (see Theorem 1.7.1 of \cite{Simon2005}):

\begin{theorem} \label{Thm-5.1}
Let the basis functions for $\Phi_{n}$ be OPUC $\{\varphi_{j}\}_{j = 0}^{\infty}$ obeying the Nevai class. Then locally uniformly whenever $z \in \mathbb{D}$,
\begin{equation*}
\begin{split}
& \rho_{n, v} (z) \\
 & \quad = \frac{1}{\pi} \left(\frac{1}{(1 - \lvert z \rvert^{2})^{2}} + \frac{\lvert v \rvert^{2} [1 - \lvert z \rvert^{2} + o (1)]}{\lvert \varphi_{n + 1}^{\ast} (z) \rvert^{2}} \left|\frac{z}{1 - \lvert z \rvert^{2}} + \frac{\overline{\varphi_{n + 1}^{\ast \, \prime} (z)} + o (1)}{\overline{\varphi_{n + 1}^{\ast} (z)}}\right|^{2}\right) \exp \left(-\frac{\lvert v \rvert^{2} (1 - \lvert z \rvert^{2})}{ \lvert \varphi_{n + 1}^{\ast} (z) \rvert^{2}} + o (1)\right).
\end{split}
\end{equation*}
\end{theorem}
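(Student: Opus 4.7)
The plan is to start from the density formula in Theorem~\ref{Thm-1.3} and reduce each of the three kernel quantities $K_{n}(z, z)$, $K_{n}^{(0, 1)}(z, z)$, $K_{n}^{(1, 1)}(z, z)$ to leading order, locally uniformly on $\mathbb{D}$. Writing $A (z, w) = \overline{\varphi_{n + 1}^{\ast} (w)}\,\varphi_{n + 1}^{\ast} (z) - \overline{\varphi_{n + 1} (w)}\,\varphi_{n + 1} (z)$, the Christoffel--Darboux formula reads $K_{n} (z, w) = A (z, w) / (1 - \overline{w} z)$. Routine partial differentiation in $z$ and $\overline{w}$, followed by evaluation at $w = z$, expresses each of $K_{n} (z, z)$, $K_{n}^{(0, 1)} (z, z)$, $K_{n}^{(1, 1)} (z, z)$ as an explicit rational function of $1 - |z|^{2}$ whose numerators are built from $A$, $\partial_{z} A$, $\partial_{\overline{w}} A$, and $\partial_{z} \partial_{\overline{w}} A$ on the diagonal.

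Next I introduce the ratio $r_{n} (z) = \varphi_{n + 1} (z) / \varphi_{n + 1}^{\ast} (z)$, which by the Nevai-class hypothesis \eqref{eq-5.2} tends to $0$ locally uniformly on $\mathbb{D}$; Cauchy's integral formula then gives $r_{n}^{\prime} \to 0$ locally uniformly as well. Substituting $\varphi_{n + 1} = r_{n} \varphi_{n + 1}^{\ast}$ and $\varphi_{n + 1}^{\prime} = r_{n}^{\prime} \varphi_{n + 1}^{\ast} + r_{n} \varphi_{n + 1}^{\ast\,\prime}$ factors a leading $|\varphi_{n + 1}^{\ast}|^{2}$ (or $\overline{\varphi_{n + 1}^{\ast}}\,\varphi_{n + 1}^{\ast\,\prime}$, or $|\varphi_{n + 1}^{\ast\,\prime}|^{2}$) out of each diagonal quantity, with the remaining scalar coefficients equal to $1 + o (1)$ in each case. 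To leading order one obtains
\begin{equation*}
K_{n} (z, z)
 = \frac{|\varphi_{n + 1}^{\ast} (z)|^{2}}{1 - |z|^{2}}\,[1 + o (1)]
\quad \text{and} \quad
\frac{K_{n}^{(0, 1)} (z, z)}{K_{n} (z, z)}
 = \frac{\overline{\varphi_{n + 1}^{\ast\,\prime} (z)} + o (1)}{\overline{\varphi_{n + 1}^{\ast} (z)}} + \frac{z}{1 - |z|^{2}},
\end{equation*}
where each $o (1)$ refers to locally uniform decay as $n \to \infty$.

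The crucial algebraic step is then to verify the cancellation
\begin{equation*}
\frac{K_{n}^{(1, 1)} (z, z)}{K_{n} (z, z)} - \frac{\lvert K_{n}^{(0, 1)} (z, z) \rvert^{2}}{K_{n} (z, z)^{2}}
 = \frac{1}{(1 - |z|^{2})^{2}} + o (1),
\end{equation*}
which is the analog of the corresponding identity for the pure Szeg\H{o} kernel $1 / (1 - \overline{w} z)$. The potentially unbounded contributions $|\varphi_{n + 1}^{\ast\,\prime} / \varphi_{n + 1}^{\ast}|^{2}$ and $2 \operatorname{Re} [\overline{z}\,\varphi_{n + 1}^{\ast\,\prime} / \varphi_{n + 1}^{\ast}] / (1 - |z|^{2})$ appear with equal signs in $K_{n}^{(1, 1)} / K_{n}$ and $|K_{n}^{(0, 1)}|^{2} / K_{n}^{2}$ and therefore cancel out, leaving only the Szeg\H{o}-kernel constant. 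Once this identity is in hand, the remaining $|v|^{2}$ contribution to $\rho_{n, v}$ reduces to $|v|^{2}\,|K_{n}^{(0, 1)}|^{2} / K_{n}^{3}$, which produces the prefactor $|v|^{2} [1 - |z|^{2} + o (1)] / |\varphi_{n + 1}^{\ast} (z)|^{2}$ multiplying the squared modulus in the statement, while the exponential $\exp (-|v|^{2} / K_{n} (z, z))$ becomes $\exp (-|v|^{2} (1 - |z|^{2}) / |\varphi_{n + 1}^{\ast} (z)|^{2} + o (1))$.

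The main obstacle will be the bookkeeping of error terms across the several derivatives. Since $|\varphi_{n + 1}^{\ast\,\prime} / \varphi_{n + 1}^{\ast}|$ need not remain bounded as $n \to \infty$, residual remainders of the form $r_{n} \cdot \varphi_{n + 1}^{\ast\,\prime} / \varphi_{n + 1}^{\ast}$ and $r_{n}^{\prime} \cdot \varphi_{n + 1}^{\ast} / \varphi_{n + 1}^{\ast\,\prime}$ must be controlled by combining the locally uniform convergence $r_{n}, r_{n}^{\prime} \to 0$ with the exact cancellation of the leading logarithmic-derivative contribution identified above; only after that cancellation is the surviving remainder uniformly bounded on compact subsets of $\mathbb{D}$. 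Verifying that every surviving error term is genuinely $o (1)$ on compact subsets of $\mathbb{D}$, and not merely bounded, is the most delicate part of the argument.
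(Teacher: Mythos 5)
Your proposal follows essentially the same route as the paper: it passes the diagonal Christoffel--Darboux kernels $K_{n}(z,z)$, $K_{n}^{(0,1)}(z,z)$, $K_{n}^{(1,1)}(z,z)$ through the Nevai-class limit \eqref{eq-5.2} together with $(\varphi_{n}/\varphi_{n}^{\ast})^{\prime}\to 0$, establishes the key cancellation $K_{n}^{(1,1)}/K_{n}-\lvert K_{n}^{(0,1)}\rvert^{2}/K_{n}^{2}=(1-\lvert z\rvert^{2})^{-2}+o(1)$, and assembles the result via Theorem~\ref{Thm-1.3}, exactly as in the paper's equations \eqref{eq-5.6}--\eqref{eq-5.9}. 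The only cosmetic difference is that you derive the diagonal kernel identities by direct differentiation where the paper cites them from \cite{CorleyLedoan2020}; your closing remarks on error bookkeeping are a fair caveat but do not change the argument.
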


\begin{proof}
From equations (55), (58), and (59) of \cite{CorleyLedoan2020}, we have the OPUC Christoffel--Darboux kernels
\begin{gather}
K_{n} (z, z)
 = \frac{\lvert \varphi_{n + 1}^{\ast} (z) \rvert^{2} - \lvert \varphi_{n + 1} (z) \rvert^{2}}{1 - \lvert z \rvert^{2}}, \label{eq-5.3} \\ \addlinespace[1mm]
K_{n}^{(0, 1)} (z, z)
 = \frac{\overline{\varphi_{n + 1}^{\ast \, \prime} (z)} \varphi_{n + 1}^{\ast} (z) - \overline{\varphi_{n+ 1}^{\prime} (z)} \varphi_{n + 1} (z)}{1 - \lvert z \rvert^{2}} + \frac{z K_{n} (z, z)}{1 - \lvert z \rvert^{2}}, \label{eq-5.4}
\end{gather}
and
\begin{equation} \label{eq-5.5}
K_{n}^{(1, 1)} (z, z)
 = \frac{\lvert \varphi_{n + 1}^{\ast \, \prime} (z) \rvert^{2} - \lvert \varphi_{n + 1}^{\prime} (z) \rvert^{2}}{1 - \lvert z \rvert^{2}} + \frac{2 \operatorname{Re} [\overline{z} K_{n}^{(0, 1)} (z, z)] + K_{n} (z, z)}{1 - \lvert z \rvert^{2}}.
\end{equation}
It follows from Theorem \ref{Thm-1.3}, in conjunction with \eqref{eq-5.2} and \eqref{eq-5.3}, that
\begin{equation} \label{eq-5.6}
\exp \left(-\frac{\lvert v \rvert^{2}}{K_{n} (z, z)}\right)
 = \exp \left(-\frac{\lvert v \rvert^{2} (1 - \lvert z \rvert^{2})}{\lvert \varphi_{n + 1}^{\ast} (z) \rvert^{2}} + o (1)\right).
\end{equation}
Locally uniformly whenever $z \in \mathbb{D}$, from \eqref{eq-5.2}
\begin{equation} \label{eq-5.7}
\lim_{n \to \infty} \left(\frac{\varphi_{n} (z)}{\varphi_{n}^{\ast} (z)}\right)^{\prime}
 = 0.
\end{equation}
Combining \eqref{eq-5.2}--\eqref{eq-5.5} and \eqref{eq-5.7} results in
\begin{equation} \label{eq-5.8}
\frac{K_{n}^{(1, 1)} (z, z)}{K_{n} (z, z)} - \frac{\lvert K_{n}^{(0, 1)} (z, z) \rvert^{2}}{K_{n} (z, z)^{2}}
 = \frac{1}{(1 - \lvert z \rvert^{2})^{2}}+ o (1).
\end{equation}
Now, using \eqref{eq-5.2}--\eqref{eq-5.4} and the fact that $\varphi_{n}$ and $\varphi_{n}^{\prime}$ each approaches 0 when $n \to \infty$, we obtain
\begin{equation} \label{eq-5.9}
\frac{\lvert K_{n}^{(0, 1)} (z, z) \rvert^{2}}{K_{n} (z, z)^{3}}
 = \frac{1 - \lvert z \rvert^{2} + o (1)}{\lvert \varphi_{n + 1}^{\ast} (z) \rvert^{2}} \left|\frac{z}{1 - \lvert z \rvert^{2}} + \frac{\overline{\varphi_{n + 1}^{\ast \, \prime} (z)} + o (1)}{\overline{\varphi_{n + 1}^{\ast} (z)}}\right|^{2}.
\end{equation}
The assertion is deducible by Theorem \ref{Thm-1.3} and from \eqref{eq-5.6}, \eqref{eq-5.8}, and \eqref{eq-5.9}.
\end{proof}

\section{Szeg\H{o} class} \label{section_6}

If $\{\varphi_{j}\}_{j = 0}^{\infty}$ be OPUC in the Szeg\H{o} class, then $\mu$ is absolutely continuous with respect to the arc length measure with the nonnegative weight function $W$ (Radon--Nykodym derivative), defined and measurable on $[-\pi, \pi]$, for which the integrals $\int_{-\pi}^{\pi} W (\theta) \, d\theta$ and $\int_{-\pi}^{\pi} \lvert \log W (\theta) \rvert \, d\theta$ both exist. The former integral is assumed to be positive.

By Theorem 12.1.1 of \cite{Szego1975}, locally uniformly whenever $\lvert z \rvert < 1$, $\lim_{n \to \infty} \varphi_{n + 1}^{\ast} (z) = D (z)^{-1}$, where
\begin{equation*}
D (\xi)
 = \exp \left[\frac{1}{4 \pi} \int_{-\pi}^{\pi} \log W (\theta) \left(\frac{1 + \xi e^{-i \theta}}{1 - \xi e^{-i \theta}}\right) d \theta\right]
\end{equation*}
is uniquely determined by $W$ and analytic and nonzero whenever $\lvert \xi \rvert < 1$. Observe that $D (0) > 0$. Thus, whenever $\lvert z \rvert < 1$, $\lim_{n \to \infty} \varphi_{n + 1}^{\ast \, \prime} (z) = -D^{\prime} (z) / D (z)^{2}$. We apply Theorem \ref{Thm-5.1}. The following consequence is therefore clear:

\begin{theorem} \label{Thm-6.1}
Let the basis functions for $\Phi_{n}$ be OPUC $\{\varphi_{j}\}_{j = 0}^{\infty}$ obeying the Szeg\H{o} class. Then locally uniformly whenever $\lvert z \rvert < 1$,
\begin{equation*}
\lim_{n \to \infty} \rho_{n, v} (z)
 = \frac{1}{\pi} \left(\frac{1}{(1 - \lvert z \rvert^{2})^{2}} + (1 - \lvert z \rvert^{2}) \lvert v D (z) \rvert^{2} \left|\frac{z}{1 - \lvert z \rvert^{2}} - \frac{\overline{D^{\prime} (z)}}{\overline{D (z)}}\right|^{2}\right) \exp [-(1 - \lvert z \rvert^{2}) \lvert v D (z) \rvert^{2}].
\end{equation*}
\end{theorem}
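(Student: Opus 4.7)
The plan is to invoke Theorem \ref{Thm-5.1} and pass to the limit $n \to \infty$ inside its formula, using the Szegő class asymptotics $\varphi_{n+1}^{\ast}(z) \to D(z)^{-1}$ and $\varphi_{n+1}^{\ast \, \prime}(z) \to -D'(z)/D(z)^{2}$ recorded immediately before the theorem statement.

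First I would verify that the Szegő class satisfies the Nevai hypothesis of Theorem \ref{Thm-5.1}, i.e.\ that $\varphi_{n}(z)/\varphi_{n}^{\ast}(z) \to 0$ locally uniformly on $\mathbb{D}$. This is essentially immediate: on any compact $K \subset \mathbb{D}$, the denominator $\varphi_{n}^{\ast}$ converges to the nonvanishing analytic function $D^{-1}$, so $1/\varphi_{n}^{\ast}$ is uniformly bounded on $K$; and the numerator $\varphi_{n}(z) = z^{n} \overline{\varphi_{n}^{\ast}(1/\overline{z})}$ tends to $0$ uniformly on $K$, because the factor $z^{n}$ decays geometrically on $K$ while $\varphi_{n}^{\ast}(1/\overline{z})$ is uniformly bounded (its argument stays in a compact subset of $\mathbb{C} \setminus \overline{\mathbb{D}}$, where the sequence $\{\varphi_{n}^{\ast}\}$ is locally uniformly bounded as a consequence of the Szegő limit). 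Hence Theorem \ref{Thm-5.1} applies.

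Next I would substitute the Szegő limits into the asymptotic expression of Theorem \ref{Thm-5.1}. Specifically $1/|\varphi_{n+1}^{\ast}(z)|^{2} \to |D(z)|^{2}$, so the argument of the outer exponential converges to $-(1 - |z|^{2}) |v D(z)|^{2}$; meanwhile the logarithmic-derivative ratio satisfies
\begin{equation*}
\frac{\overline{\varphi_{n+1}^{\ast \, \prime}(z)}}{\overline{\varphi_{n+1}^{\ast}(z)}}
 \longrightarrow \frac{-\overline{D'(z)}/\overline{D(z)}^{\, 2}}{1/\overline{D(z)}}
 = -\frac{\overline{D'(z)}}{\overline{D(z)}},
\end{equation*}
so the second summand inside the large parenthesis becomes $(1 - |z|^{2}) |v D(z)|^{2} \bigl| z/(1 - |z|^{2}) - \overline{D'(z)}/\overline{D(z)} \bigr|^{2}$, matching the stated formula exactly.

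The only nontrivial point is justifying that every $o(1)$ term appearing in Theorem \ref{Thm-5.1} remains $o(1)$ \emph{locally uniformly} on compact $K \subset \mathbb{D}$ under the Szegő hypothesis, so that the substitution is legitimate; this is where I expect the main work. The point reduces to three uniform-on-$K$ convergences: (i) $\varphi_{n+1}^{\ast} \to D^{-1}$ uniformly, with $D^{-1}$ bounded away from $0$ and $\infty$ on $K$; (ii) $\varphi_{n+1}^{\ast \, \prime} \to -D'/D^{2}$ uniformly on $K$, which follows from (i) via Weierstrass's theorem on compact convergence of derivatives of holomorphic functions; and (iii) $\varphi_{n}$ together with $\varphi_{n}'$ tends to $0$ uniformly on $K$, again by Weierstrass's theorem applied to the uniform convergence $\varphi_{n} \to 0$ established above. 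Granting these uniformities, continuity of the arithmetic operations, the modulus, and the exponential yields the claimed locally uniform limit of $\rho_{n, v}$.
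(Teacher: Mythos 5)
Your overall route is exactly the paper's: apply Theorem \ref{Thm-5.1} and substitute the Szeg\H{o} asymptotics $\varphi_{n+1}^{\ast}\to D^{-1}$ and (via Weierstrass) $\varphi_{n+1}^{\ast\,\prime}\to -D'/D^{2}$, and your algebra for the resulting limit matches the stated formula. The paper's proof is precisely this two-line substitution and does not even pause to check that the Szeg\H{o} class falls under the hypothesis of Theorem \ref{Thm-5.1}, so your instinct to verify condition \eqref{eq-5.2} is sound. However, the verification you give is flawed. You claim that $\{\varphi_{n}^{\ast}\}$ is locally uniformly bounded on compact subsets of $\mathbb{C}\setminus\overline{\mathbb{D}}$ ``as a consequence of the Szeg\H{o} limit''; that limit is an \emph{interior} statement and gives no such exterior bound. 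In fact the bound is false in general: the leading coefficient of $\varphi_{n}^{\ast}$ is $\overline{\varphi_{n}(0)}=-\kappa_{n}\alpha_{n-1}$ (with $\alpha_{j}$ the Verblunsky coefficients), so for fixed $\lvert w\rvert=R>1$ one has $\lvert\varphi_{n}^{\ast}(w)\rvert\asymp\lvert\alpha_{n-1}\rvert R^{n}$, which is unbounded whenever $\alpha_{n-1}$ decays subgeometrically --- and $\sum_{j}\lvert\alpha_{j}\rvert^{2}<\infty$ permits, e.g., $\alpha_{j}=1/(j+1)$. Worse, the step is circular: since $\varphi_{n}^{\ast}(1/\overline{z})=(1/\overline{z})^{n}\,\overline{\varphi_{n}(z)}$, writing $\varphi_{n}(z)=z^{n}\overline{\varphi_{n}^{\ast}(1/\overline{z})}$ and ``bounding the second factor'' is exactly the assertion $\lvert\varphi_{n}(z)\rvert\leq C\lvert z\rvert^{n}$ that you are trying to establish.

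The gap is easily repaired by either of two standard arguments. Softly: the Szeg\H{o} condition is equivalent to $\sum_{j}\lvert\alpha_{j}\rvert^{2}<\infty$, hence $\alpha_{j}\to 0$, i.e., the measure lies in the Nevai class, and \eqref{eq-5.2} is then the hypothesis of Theorem \ref{Thm-5.1} verbatim. Directly: from \eqref{eq-5.3}, $K_{n}(z,z)\leq\lvert\varphi_{n+1}^{\ast}(z)\rvert^{2}/(1-\lvert z\rvert^{2})$, and the right-hand side converges locally uniformly to the continuous function $\lvert D(z)\rvert^{-2}(1-\lvert z\rvert^{2})^{-1}$; since $K_{n}(z,z)$ increases in $n$ to a finite continuous limit, Dini's theorem gives $\lvert\varphi_{n}(z)\rvert^{2}=K_{n}(z,z)-K_{n-1}(z,z)\to 0$ uniformly on compacta, and Weierstrass then handles $\varphi_{n}'$. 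With either repair, the remainder of your argument (uniformity of the $o(1)$ terms and continuity of the arithmetic operations) goes through and reproduces the paper's conclusion.
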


\begin{corollary} \label{Cor-6.2}
Under the hypotheses of Theorem \ref{Thm-6.1}, if $\varphi_{j} (z) = z^{j}$, then, for every open disk $D_{\varrho} \subset \mathbb{D}$,
\begin{equation*}
\lim_{n \to \infty} \operatorname{\mathbb{E}} [N_{D_{\varrho}}^{\Phi_{n}, v}]
 = \frac{\varrho^{2}}{1 - \varrho^{2}} \exp [-\lvert v \rvert^{2} (1 - \varrho^{2})].
\end{equation*}
Furthermore, whenever $\lvert z \rvert < 1$,
\begin{equation*}
\lim_{n \to \infty} \frac{\rho_{n, v} (z)}{\operatorname{\mathbb{E}} [N_{D_{\varrho}}^{\Phi_{n}, v}]}
 = \frac{1 - \varrho^{2}}{\pi \varrho^{2}} \left(\frac{1}{(1 - \lvert z \rvert^{2})^{2}} + \frac{\lvert v z \rvert^{2}}{1 - \lvert z \rvert^{2}}\right).
\end{equation*}
\end{corollary}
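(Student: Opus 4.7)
The plan is to specialize Theorem \ref{Thm-6.1} to the Lebesgue weight, justify interchanging the $n \to \infty$ limit with the area integral, and reduce to a tractable one-dimensional integral.

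First I would observe that $\varphi_{j}(z) = z^{j}$ is the orthonormal system for $d\mu(\theta) = (2\pi)^{-1} d\theta$, i.e., $W \equiv 1$. The integrand in the definition of the Szeg\H{o} function then contains $\log W \equiv 0$, so $D(z) \equiv 1$ and $D^{\prime}(z) \equiv 0$ on $\mathbb{D}$. Substituting these into the statement of Theorem \ref{Thm-6.1} collapses the cross term and yields
\begin{equation*}
\lim_{n \to \infty} \rho_{n, v}(z) = \frac{1}{\pi}\left(\frac{1}{(1 - \lvert z \rvert^{2})^{2}} + \frac{\lvert v z \rvert^{2}}{1 - \lvert z \rvert^{2}}\right) \exp[-(1 - \lvert z \rvert^{2}) \lvert v \rvert^{2}]
\end{equation*}
locally uniformly on $\mathbb{D}$. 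Because $\overline{D_{\varrho}}$ is a compact subset of $\mathbb{D}$, this local uniform convergence allows me to pass the limit through the area integral $\operatorname{\mathbb{E}}[N_{D_{\varrho}}^{\Phi_{n}, v}] = \iint_{D_{\varrho}} \rho_{n, v}(z)\, dx\, dy$.

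Next I would pass to polar coordinates $z = r e^{i\theta}$, trivializing the angular integration, and apply the substitution $s = 1 - r^{2}$ (so that $r\, dr = -\tfrac{1}{2}\, ds$) to convert the resulting radial integral to
\begin{equation*}
\int_{1 - \varrho^{2}}^{1} \left(\frac{1}{s^{2}} + \frac{\lvert v \rvert^{2}}{s} - \lvert v \rvert^{2}\right) e^{-s\lvert v \rvert^{2}}\, ds.
\end{equation*}
The key observation is that this integrand is exactly $\tfrac{d}{ds}\bigl[(1 - s^{-1}) e^{-s\lvert v \rvert^{2}}\bigr]$, so it antidifferentiates in closed form. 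Evaluating at $s = 1$ (where the factor $1 - s^{-1}$ vanishes) and at $s = 1 - \varrho^{2}$ produces the first stated limit.

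The second formula follows immediately by forming the quotient of the pointwise limit of $\rho_{n, v}(z)$ and the limit of $\operatorname{\mathbb{E}}[N_{D_{\varrho}}^{\Phi_{n}, v}]$ just obtained, and simplifying algebraically. The only nonroutine step in the whole plan is recognizing the exact-differential structure of the polar integrand; no deeper obstacle is expected, since all the analytic heavy lifting was already done in the proof of Theorem \ref{Thm-6.1}.
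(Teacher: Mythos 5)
Your route to the first display is the paper's: specialize Theorem \ref{Thm-6.1} to $W \equiv 1$ so that $D \equiv 1$ and $D^{\prime} \equiv 0$, pass the locally uniform limit through the area integral over the compact set $\overline{D_{\varrho}} \subset \mathbb{D}$, reduce to a radial integral, and antidifferentiate in closed form. The paper merely instructs the reader to ``express the integrand as a derivative with respect to the radial coordinate''; your substitution $s = 1 - r^{2}$ and the explicit primitive $(1 - s^{-1}) e^{-s \lvert v \rvert^{2}}$ carry that instruction out correctly, and the endpoint evaluation does yield $\frac{\varrho^{2}}{1 - \varrho^{2}} \exp [-\lvert v \rvert^{2} (1 - \varrho^{2})]$. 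If anything you are more careful than the paper here, since you justify the interchange of limit and integral.

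However, your last step does not deliver the second display as stated. Forming the quotient of $\lim_{n} \rho_{n, v} (z)$ and $\lim_{n} \operatorname{\mathbb{E}} [N_{D_{\varrho}}^{\Phi_{n}, v}]$ gives
\begin{equation*}
\frac{1 - \varrho^{2}}{\pi \varrho^{2}} \left(\frac{1}{(1 - \lvert z \rvert^{2})^{2}} + \frac{\lvert v z \rvert^{2}}{1 - \lvert z \rvert^{2}}\right) \exp [\lvert v \rvert^{2} (\lvert z \rvert^{2} - \varrho^{2})],
\end{equation*}
because the factor $\exp [-\lvert v \rvert^{2} (1 - \lvert z \rvert^{2})]$ in the numerator and $\exp [-\lvert v \rvert^{2} (1 - \varrho^{2})]$ in the denominator do not cancel unless $v = 0$ or $\lvert z \rvert = \varrho$. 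So ``simplifying algebraically'' cannot produce the claimed right-hand side; the residual exponential is genuinely present. This mismatch already sits in the corollary as printed (the paper's own proof never addresses the second display), but your write-up asserts that the stated formula follows from the computation you describe, and it does not: you should either carry the factor $\exp [\lvert v \rvert^{2} (\lvert z \rvert^{2} - \varrho^{2})]$ along, or note explicitly that the identity as stated holds only for $v = 0$.
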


\begin{proof}
For if $\varphi_{j} (z) = z^{j}$, then $D (z) = 1$ and Theorem 4 of \cite{CorleyLedoan2020} is recovered. Then locally uniformly whenever $\lvert z \rvert < 1$,
\begin{equation*}
\lim_{n \to \infty} \rho_{n, v} (z)
 = \frac{1}{\pi} \left(\frac{1}{(1 - \lvert z \rvert^{2})^{2}} + \frac{\lvert v z \rvert^{2}}{1 - \lvert z \rvert^{2}}\right) \exp [-\lvert v \rvert^{2} (1 - \lvert z \rvert^{2})].
\end{equation*}
It is readily seen that
\begin{equation*}
\lim_{n \to \infty} \operatorname{\mathbb{E}} [N_{D_{\varrho}}^{\Phi_{n}, v}]
 = 2 \int_{0}^{\varrho} \left(\frac{1}{(1 - r^{2})^{2}} + \frac{\lvert v \rvert^{2} r^{2}}{1 - r^{2}}\right) \exp [-\lvert v \rvert^{2} (1 - r^{2})] r \, dr.
\end{equation*}
Express the integrand as a derivative with respect to the radial coordinate $r$ and apply the first fundamental theorem of calculus.
\end{proof}

\section{Sthal, Totik, and Ullman class} \label{section_7}

Assume that $\mu$ is regular in the sense of Sthal, Totik, and Ullman \cite{StahlTotik1992, Ullman1972}. The leading coefficient $\kappa_{j}$ of $\varphi_{j}$ is subject to the condition
\begin{equation} \label{eq-7.1}
\lim_{j \to \infty} \sqrt[j]{\kappa_{j}}
 = 1.
\end{equation}

\begin{theorem} \label{Thm-7.1}
Let $\mu$ defined by \eqref{eq-5.1} for OPUC $\{\varphi_{j}\}_{j = 0}^{\infty}$ be a strictly positive Borel measure on $[-\pi, \pi[$, absolutely continuous with respect to the Lebesgue measure, and regular in the sense of Sthal, Totik, and Ullman. Assume that $\mu$ has a positive weight function that is continuous on $\mathbb{T}$. Then
\begin{equation*}
\lim_{n \to \infty} \operatorname{\mathbb{E}} \left[\frac{1}{n} \, N_{\mathbb{D}}^{\Phi_{n}, v}\right]
 = \frac{1}{2}.
\end{equation*}
\end{theorem}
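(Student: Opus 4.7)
The plan is to apply Theorem \ref{Thm-1.2} with $\Theta = \mathbb{D}$, giving
\begin{equation*}
\operatorname{\mathbb{E}}[N_{\mathbb{D}}^{\Phi_{n}, v}]
 = \frac{1}{2 \pi i}\oint_{\mathbb{T}}\frac{\overline{K_{n}^{(0, 1)}(z, z)}}{K_{n}(z, z)}\exp\!\left(-\frac{\lvert v \rvert^{2}}{K_{n}(z, z)}\right)\,dz,
\end{equation*}
and then extract the $n/2$ asymptotic by combining a uniform Christoffel asymptotic on $\mathbb{T}$ with an algebraic identity that captures the degree of each basis polynomial. My first step is to discard the exponential factor. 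Under regularity and continuity plus strict positivity of $W$, Totik's theorem on the Christoffel function yields the \emph{uniform} estimate $(n + 1)/K_{n}(e^{i \theta}, e^{i \theta}) \to 2 \pi W(\theta)$ on $\mathbb{T}$, so $\exp(-\lvert v \rvert^{2}/K_{n}) = 1 + O(1/n)$ uniformly. Since the Cauchy--Schwarz bound $\lvert K_{n}^{(0, 1)}\rvert^{2} \le K_{n} K_{n}^{(1, 1)}$ together with the $L^{2}$-Bernstein inequality $\lVert \varphi_{j}'\rVert_{L^{2}(\mathbb{T}, d\theta)} \le j\,\lVert \varphi_{j}\rVert_{L^{2}(\mathbb{T}, d\theta)}$ yields $\oint_{\mathbb{T}} \lvert \overline{K_{n}^{(0, 1)}}/K_{n}\rvert\,\lvert dz \rvert = O(n)$, replacing the exponential by $1$ introduces only an $O(1)$ error, which is negligible once one divides by $n$.

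It therefore suffices to show that $\operatorname{\mathbb{E}}[N_{\mathbb{D}}^{\Phi_{n}, 0}]/n \to 1/2$. Writing $z = e^{i \theta}$, $dz = i e^{i \theta}\,d\theta$, and taking the real part (permissible since $\operatorname{\mathbb{E}}[N_{\mathbb{D}}^{\Phi_{n}, 0}]$ is real) turns the contour integral into
\begin{equation*}
\operatorname{\mathbb{E}}[N_{\mathbb{D}}^{\Phi_{n}, 0}]
 = \frac{1}{2 \pi}\int_{0}^{2 \pi}\frac{\sum_{j = 0}^{n}\operatorname{Re}[e^{i \theta}\varphi_{j}'(e^{i \theta})\overline{\varphi_{j}(e^{i \theta})}]}{K_{n}(e^{i \theta}, e^{i \theta})}\,d\theta.
\end{equation*}
Substituting the uniform approximation $1/K_{n} = 2 \pi W/(n + 1) + o(1/n)$ and invoking $d\mu = W\,d\theta$ produces the principal term
\begin{equation*}
\frac{1}{n + 1}\sum_{j = 0}^{n}\operatorname{Re}\int_{\mathbb{T}} z\,\varphi_{j}'(z)\overline{\varphi_{j}(z)}\,d\mu
 = \frac{1}{n + 1}\sum_{j = 0}^{n} j
 = \frac{n}{2}.
\end{equation*}

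The decisive algebraic identity $\int_{\mathbb{T}} z\,\varphi_{j}'(z)\overline{\varphi_{j}(z)}\,d\mu = j$ follows at once from the fact that $z\varphi_{j}'(z)$ is a polynomial of degree $j$ whose leading coefficient is $j \kappa_{j}$ (where $\kappa_{j}$ is the leading coefficient of $\varphi_{j}$), so its expansion in the orthonormal basis must take the form $z\varphi_{j}'(z) = j\,\varphi_{j}(z) + \sum_{k < j} c_{j, k}\varphi_{k}(z)$; orthonormality then isolates the desired coefficient. The error from the $o(1/n)$ approximation of $1/K_{n}$ is bounded by $o(1/n) \cdot \sum_{j = 0}^{n}\int_{0}^{2 \pi}\lvert \varphi_{j}'\rvert\lvert \varphi_{j}\rvert\,d\theta$; Cauchy--Schwarz, $L^{2}$-Bernstein, and the estimate $\int \lvert \varphi_{j}\rvert^{2}\,d\theta \le 1/\min_{\mathbb{T}} W$ (valid because $W$ is continuous and strictly positive on the compact set $\mathbb{T}$) bound each integral by $O(j)$, yielding a total error of $o(1/n) \cdot O(n^{2}) = o(n)$. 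Dividing by $n$ completes the proof. The principal obstacle will be invoking Totik's theorem in its \emph{uniform} form: it is precisely the continuity and positivity of $W$ that promote the standard pointwise a.e.\ Christoffel asymptotic to uniform convergence on $\mathbb{T}$, which is what permits the $o(1/n)$ error to pass cleanly through the sum $\sum_{j = 0}^{n}$; with only pointwise convergence a far more delicate dominated-convergence argument would be required.
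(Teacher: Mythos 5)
Your argument is sound, but it reaches the constant $\tfrac{1}{2}$ by a genuinely different route than the paper. Both proofs start from the contour representation of Theorem \ref{Thm-1.2} over $\mathbb{T}$ and both discard the exponential factor because $K_{n}(z,z)$ grows at least linearly in $n$ on $\mathbb{T}$ (the paper cites Theorem 3.1 of Levin and Lubinsky for the two-sided bound $C_{1} \leq \lim_{n} \frac{1}{n} K_{n} \leq C_{2}$; your quantitative $O(1)$ error estimate for this step is actually more explicit than what the paper records). The divergence is in the main term $\frac{1}{2 \pi i} \oint_{\mathbb{T}} \overline{K_{n}^{(0,1)}} / K_{n} \, dz$: the paper imports the derivative-kernel universality limit (Levin--Lubinsky, Corollary 1.2), which gives $\frac{1}{n} \, \overline{K_{n}^{(0,1)}} / K_{n} \to \frac{1}{2} \overline{z}$, and then a single application of Green's theorem converts $\frac{1}{4 \pi i} \oint_{\mathbb{T}} \overline{z} \, dz$ into the normalized area of $\mathbb{D}$. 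You instead use only the Christoffel-function asymptotic $(n+1)/K_{n} \to 2 \pi W$ and then integrate the numerator exactly against $d\mu = W \, d\theta$, where the elementary identity $\int_{\mathbb{T}} z \varphi_{j}^{\prime} \overline{\varphi_{j}} \, d\mu = j$ (leading-coefficient matching plus orthonormality) yields $\frac{1}{n+1} \sum_{j = 0}^{n} j = \frac{n}{2}$; your Bernstein/Cauchy--Schwarz bookkeeping does control the $o(1/n)$ error against the $O(n^{2})$ size of the numerator. Your route makes transparent where $\tfrac{1}{2}$ comes from --- it is the average degree of the basis --- and trades the deeper ratio asymptotic for $K_{n}^{(0,1)}/K_{n}$ for the better-known Christoffel asymptotic plus pure algebra; the paper's route is shorter and produces the boundary limit $\frac{1}{2} \overline{z}$ as a reusable object (the same template gives $\frac{2}{3} \overline{z}$ in the Bergman case, Theorem \ref{Thm-8.3}). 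One point to make explicit if you write this up: your $O(n)$ bound on $\oint_{\mathbb{T}} \lvert \overline{K_{n}^{(0,1)}} / K_{n} \rvert \, \lvert dz \rvert$ requires the uniform lower bound $K_{n} \geq C_{1} n$ on $\mathbb{T}$, which is exactly the Levin--Lubinsky/Totik input you are already invoking, so it should be cited there as well.
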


\begin{proof}
In Theorem 3.1 of \cite{LevinLubinsky2007} it is proved that, uniformly whenever $\theta$ is in a compact set $J \subset [-\pi, \pi[$, with $z = e^{i \theta}$, there exist positive absolute constants $C_{1}$ and $C_{2}$ such that $C_{1} \leq \lim_{n \to \infty} \frac{1}{n} \, K_{n} (z, z) \leq C_{2}$. We infer from Corollary 1.2 of \cite{LevinLubinsky2007} that
\begin{equation*}
\lim_{n \to \infty} \frac{1}{n} \, \frac{\overline{K_{n}^{(0, 1)} (z, z)}}{K_{n} (z, z)}
 = \frac{1}{2} \, \overline{z}.
\end{equation*}
Thus
\begin{equation*}
\lim_{n \to \infty} \exp \left(-\frac{\lvert v \rvert^{2}}{K_{n} (z, z)}\right)
 = 1.
\end{equation*}
Since $\rho_{n, v}$ has zero mass on the real line, by Green's theorem
\begin{equation*}
\lim_{n \to \infty} \operatorname{\mathbb{E}} \left[\frac{1}{n} \, N_{\mathbb{D}}^{\Phi_{n}, v}\right]
 = \frac{1}{4 \pi i} \oint_{\mathbb{T}} \overline{z} \, dz
 = \frac{1}{2 \pi} \iint_{\mathbb{D}} dx \, dy
 = \frac{1}{2}.
\end{equation*}
\end{proof}

Theorem \ref{Thm-7.1} holds under the relaxed assumption that $\mu$ has a continuous weight function on $\mathbb{T} \backslash E$ for every countable set $E$, if required.

\section{Bergman polynomials} \label{section_8}

Let $\{p_{j}\}_{j = 0}^{\infty}$ be Bergman polynomials $p_{j} (z) = \kappa_{j} z^{j} + \dotsb$ with $\kappa_{j} > 0$ for $j = 0, 1, \dotso$, satisfying \eqref{eq-7.1} and
\begin{equation} \label{eq-8.1}
\int_{\mathbb{D}} p_{j} (z) \overline{p_{k} (z)} \, d \mu (z)
 = \left\{ \begin{array}{ll}
1 & \mbox{if $j = k$,} \\
0 & \mbox{if $j \neq k$.}
\end{array}
\right.
\end{equation}
Assume that $\mu$ is absolutely continuous with respect to the planar Lebesgue measure $d \mu (z) = w (x, y) dx dy$, where $w$ is the weight function.

\begin{theorem} \label{Thm-8.1}
Let the basis functions for $\Phi_{n}$ be Bergman polynomials $\{p_{j}\}_{j = 0}^{\infty}$ on $\mathbb{D}$. Then locally uniformly for every $z \in \mathbb{D}$,
\begin{equation*}
\lim_{n \to \infty} \rho_{n, v} (z)
 = \frac{2}{\pi} \left(\frac{1}{(1 - \lvert z \rvert^{2})^{2}} + 2 \pi \lvert v z \rvert^{2}\right) \exp [-\pi \lvert v \rvert^{2} (1 - \lvert z \rvert^{2})^{2}].
\end{equation*}
\end{theorem}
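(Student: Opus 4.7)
The plan is to specialize the formula for $\rho_{n,v}$ from Theorem \ref{Thm-1.3} to the Bergman basis and to pass to the limit $n\to\infty$ using the classical closed form of the Bergman reproducing kernel on the unit disk.

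First I would identify the basis explicitly. Under \eqref{eq-8.1} with the natural choice $w(x,y) \equiv 1$, the elementary computation $\int_{\mathbb{D}} z^{j}\overline{z^{k}}\,dx\,dy = \pi\delta_{jk}/(j+1)$ forces $p_{j}(z) = \sqrt{(j+1)/\pi}\,z^{j}$. The reproducing kernel series then sums to
\[
K(z,w) = \sum_{j=0}^{\infty} p_{j}(z)\overline{p_{j}(w)} = \frac{1}{\pi(1-z\overline{w})^{2}},
\]
convergent locally uniformly on $\mathbb{D}\times\mathbb{D}$. Termwise Wirtinger differentiation, which is valid on compacta, yields the locally uniform limits $K^{(0,1)}(z,w) = 2z/[\pi(1-z\overline{w})^{3}]$ and $K^{(1,1)}(z,w) = (2+4z\overline{w})/[\pi(1-z\overline{w})^{4}]$ of the partial sums $K_{n}^{(0,1)}$ and $K_{n}^{(1,1)}$.

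Next I would restrict to the diagonal $w=z$ and record the three ratios that enter Theorem \ref{Thm-1.3}. In the limit these become $|v|^{2}/K(z,z) = \pi|v|^{2}(1-|z|^{2})^{2}$, $K^{(1,1)}(z,z)/K(z,z) = (2+4|z|^{2})/(1-|z|^{2})^{2}$, and $|K^{(0,1)}(z,z)|^{2}/K(z,z)^{2} = 4|z|^{2}/(1-|z|^{2})^{2}$. Since $K_{n}(z,z)\to K(z,z) > 0$ locally uniformly on $\mathbb{D}$ and $\rho_{n,v}$ is a continuous rational function of the kernel values composed with an exponential, the limit passes inside the formula for $\rho_{n,v}$.

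After this substitution, the bracket in Theorem \ref{Thm-1.3} simplifies by the cancellation
\[
\frac{K^{(1,1)}(z,z)}{K(z,z)} - \frac{|K^{(0,1)}(z,z)|^{2}}{K(z,z)^{2}}\left(1 - \frac{|v|^{2}}{K(z,z)}\right) = \frac{2}{(1-|z|^{2})^{2}} + 4\pi|vz|^{2},
\]
where the $4|z|^{2}/(1-|z|^{2})^{2}$ term coming from the ``$1$'' in the parentheses cancels the $4|z|^{2}$ contribution to $K^{(1,1)}/K$, and the cross term reassembles as $4\pi|vz|^{2}$. Multiplying through by $1/\pi$ and combining with the exponential factor $\exp[-\pi|v|^{2}(1-|z|^{2})^{2}]$ produces the asserted expression after factoring $2/\pi$. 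The main subtlety, and the only step worth writing out carefully, is verifying the local uniform convergence of $K_{n}^{(k,l)}(z,z)$ to $K^{(k,l)}(z,z)$ on compacta of $\mathbb{D}$; this is routine for power series since on any disk $\{|z|\le r\}$ with $r<1$ the tail bound $\sum_{j>n} j^{\alpha}r^{2j} \to 0$ handles all finite-order derivatives uniformly.
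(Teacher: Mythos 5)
Your proposal is correct and follows exactly the route the paper takes: specialize the density formula of Theorem \ref{Thm-1.3} to the Bergman basis, use the locally uniform convergence $K_{n}(z,z)\to\frac{1}{\pi}(1-\lvert z\rvert^{2})^{-2}$ together with the corresponding limits of $K_{n}^{(0,1)}$ and $K_{n}^{(1,1)}$, and simplify. The paper's proof is only a two-line sketch of this, so your version (which carries out the differentiation of the Bergman kernel, the cancellation in the bracket, and the tail estimate justifying termwise limits, under the implicit normalization $w\equiv 1$ that the paper also uses) supplies the details it omits.
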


\begin{proof}
Pass the formula for $\rho_{n, v}$ of Theorem \ref{Thm-1.3} to the limit as $n \to \infty$. Apply $\lim_{n \to \infty} K_{n} (z, z) = \frac{1}{\pi} (1 - \lvert z \rvert^{2})^{-2}$ to obtain the limiting values of $K_{n}^{(0, 1)}$ and $K_{n}^{(1, 1)}$.
\end{proof}

\begin{corollary} \label{Cor-8.2}
Under the hypothesis of Theorem \ref{Thm-8.1}, for every open disk $D_{\varrho} \subset \mathbb{D}$ with $\varrho < 1$,
\begin{equation*}
\lim_{n \to \infty} \operatorname{\mathbb{E}} [N_{D_{\varrho}}^{\Phi_{n}, v}]
 = \frac{2 \varrho^{2}}{1 - \varrho^{2}} \exp [-\pi \lvert v \rvert^{2} (1 - \lvert z \rvert^{2})^{2}].
\end{equation*}
Furthermore, whenever $\lvert z \rvert < 1$,
\begin{equation*}
\lim_{n \to \infty} \frac{\rho_{n, v} (z)}{\operatorname{\mathbb{E}} [N_{D_{\varrho}}^{\Phi_{n}, v}]}
 = \frac{1 - \varrho^{2}}{\pi \varrho^{2}} \left(\frac{1}{(1 - \lvert z \rvert^{2})^{2}} + 2 \pi \lvert v z \rvert^{2}\right).
\end{equation*}
\end{corollary}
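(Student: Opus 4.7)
The plan is to mirror the strategy used for Corollary~\ref{Cor-6.2}: pass to the asymptotic density supplied by Theorem~\ref{Thm-8.1}, integrate over $D_{\varrho}$ in polar coordinates, and then recognize the resulting radial integrand as an explicit derivative so that the fundamental theorem of calculus dispatches the rest.

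First I would justify interchanging limit and integration in
\[
\lim_{n \to \infty} \operatorname{\mathbb{E}}[N_{D_{\varrho}}^{\Phi_{n}, v}]
 = \lim_{n \to \infty} \iint_{D_{\varrho}} \rho_{n, v} (z) \, dx \, dy.
\]
Since Theorem~\ref{Thm-8.1} delivers locally uniform convergence on $\mathbb{D}$ and $\overline{D_{\varrho}} \subset \mathbb{D}$ when $\varrho < 1$, uniform convergence on $\overline{D_{\varrho}}$ is at hand and dominated convergence applies. Substituting the limiting density from Theorem~\ref{Thm-8.1} and switching to polar coordinates $z = r e^{i \theta}$, the angular integration yields a factor of $2 \pi$ by radial symmetry, reducing the problem to the one-dimensional integral
\[
4 \int_{0}^{\varrho} r \left(\frac{1}{(1 - r^{2})^{2}} + 2 \pi \lvert v \rvert^{2} r^{2}\right) \exp [-\pi \lvert v \rvert^{2} (1 - r^{2})^{2}] \, dr.
\]

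Next I would observe that this integrand is a total derivative. A direct computation shows
\[
\frac{d}{dr} \left\{\frac{r^{2}}{1 - r^{2}} \exp [-\pi \lvert v \rvert^{2} (1 - r^{2})^{2}]\right\}
 = 2 r \left(\frac{1}{(1 - r^{2})^{2}} + 2 \pi \lvert v \rvert^{2} r^{2}\right) \exp [-\pi \lvert v \rvert^{2} (1 - r^{2})^{2}],
\]
the derivative of $r^{2} / (1 - r^{2})$ contributing the $(1 - r^{2})^{-2}$ term while the chain rule on the exponential contributes the quadratic $2 \pi \lvert v \rvert^{2} r^{2}$ term. This antiderivative is the Bergman analogue of the one implicit in the proof of Corollary~\ref{Cor-6.2}. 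Applying the fundamental theorem of calculus between $r = 0$ and $r = \varrho$ then produces the asymptotic formula for $\operatorname{\mathbb{E}}[N_{D_{\varrho}}^{\Phi_{n}, v}]$.

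The ratio formula is immediate afterwards: divide the limiting density from Theorem~\ref{Thm-8.1} by the limit just derived and simplify. The only step demanding a modicum of ingenuity is recognizing the antiderivative; once the particular combination $\frac{1}{(1 - r^{2})^{2}} + 2 \pi \lvert v \rvert^{2} r^{2}$ is identified as what one obtains from differentiating $\frac{r^{2}}{1 - r^{2}} \exp [-\pi \lvert v \rvert^{2} (1 - r^{2})^{2}]$, every remaining manipulation is purely algebraic, so no genuine obstacle arises beyond that guess.
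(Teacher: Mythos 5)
Your proposal is correct and is essentially the paper's own proof, which consists of the single instruction ``appeal to integration in polar coordinates''; you have merely made the antiderivative explicit, and your differentiation check of $\frac{d}{dr}\bigl\{\frac{r^{2}}{1 - r^{2}} \exp [-\pi \lvert v \rvert^{2} (1 - r^{2})^{2}]\bigr\}$ is accurate. One caveat worth recording: your computation yields the exponent $-\pi \lvert v \rvert^{2} (1 - \varrho^{2})^{2}$, which exposes a typo in the statement (it prints $\lvert z \rvert$ where $\varrho$ is meant), and as a consequence the division in the second display does not actually ``simplify'' to the printed ratio --- an honest division leaves the residual factor $\exp [-\pi \lvert v \rvert^{2} ((1 - \lvert z \rvert^{2})^{2} - (1 - \varrho^{2})^{2})]$, which cancels only if one takes the garbled first display at face value. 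This discrepancy originates in the paper (the same issue occurs in Corollary \ref{Cor-6.2}), not in your method, but your claim that the ratio formula is ``immediate'' glosses over it.
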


\begin{proof}
Appeal to integration in polar coordinates.
\end{proof}

\begin{theorem} \label{Thm-8.3}
Let $\mu$ defined by \eqref{eq-8.1} for Bergman polynomials $\{p_{j}\}_{j = 0}^{\infty}$ be a strictly positive Borel measure on $\mathbb{D}$, absolutely continuous with respect to the Lebesgue measure, and regular in the sense of Sthal, Totik, and Ullman. Assume that $\mu$ has a positive weight function that is continuous on $\mathbb{T}$. Then
\begin{equation*}
\lim_{n \to \infty} \operatorname{\mathbb{E}} \left[\frac{1}{n} \, N_{\mathbb{D}}^{\Phi_{n}, v}\right]
 = \frac{2}{3}.
\end{equation*}
\end{theorem}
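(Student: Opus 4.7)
The plan is to adapt the proof of Theorem \ref{Thm-7.1} to Bergman polynomials, substituting universality-type asymptotics for the Bergman Christoffel--Darboux kernel on $\mathbb{T}$ in place of the Levin--Lubinsky asymptotics for OPUC. The only structural change is one of scaling: for Bergman polynomials on $\mathbb{D}$ the diagonal kernel $K_{n} (z, z)$ grows like $n^{2}$ on $\mathbb{T}$ (rather than $n$), and $K_{n}^{(0, 1)} (z, z)$ grows like $n^{3}$ (rather than $n^{2}$). The ratio $\overline{K_{n}^{(0, 1)}} / K_{n}$ therefore again grows linearly in $n$, but with a different limiting constant that will produce $\tfrac{2}{3}$ in place of $\tfrac{1}{2}$.

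\textbf{Key inputs.} Because $\mu$ is regular in the sense of Sthal, Totik, and Ullman with a positive continuous weight on $\mathbb{T}$, one has $K_{n} (z, z) \to \infty$ uniformly on $\mathbb{T}$, whence
\begin{equation*}
\exp \left(-\frac{\lvert v \rvert^{2}}{K_{n} (z, z)}\right) \longrightarrow 1
\end{equation*}
uniformly in $z \in \mathbb{T}$. The crucial universality statement then needed is
\begin{equation*}
\lim_{n \to \infty} \frac{1}{n} \cdot \frac{\overline{K_{n}^{(0, 1)} (z, z)}}{K_{n} (z, z)} = \frac{2}{3} \, \overline{z}
\end{equation*}
uniformly in $z \in \mathbb{T}$. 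The value $\tfrac{2}{3}$ is forced by the model case $w \equiv 1 / \pi$: there $p_{j} (z) = \sqrt{(j + 1) / \pi} \, z^{j}$, so that on $\mathbb{T}$ one computes $K_{n} (z, z) = (n + 1) (n + 2) / (2 \pi)$ and $K_{n}^{(0, 1)} (z, z) = \overline{z} \, n (n + 1) (n + 2) / (3 \pi)$, making $\overline{K_{n}^{(0, 1)}} / (n K_{n})$ identically $\tfrac{2}{3} \overline{z}$.

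\textbf{Conclusion and main obstacle.} Given these two ingredients, apply Theorem \ref{Thm-1.2} with $\Theta = \mathbb{D}$, divide by $n$, and pass to the limit inside the contour integral (justified by the uniform convergence on the compact contour $\mathbb{T}$) to obtain, exactly as in the proof of Theorem \ref{Thm-7.1} via Green's theorem,
\begin{equation*}
\lim_{n \to \infty} \operatorname{\mathbb{E}} \left[\frac{1}{n} \, N_{\mathbb{D}}^{\Phi_{n}, v}\right] = \frac{1}{2 \pi i} \oint_{\mathbb{T}} \frac{2 \overline{z}}{3} \, dz = \frac{2}{3} \cdot \frac{1}{\pi} \iint_{\mathbb{D}} dx \, dy = \frac{2}{3}.
\end{equation*}
The principal obstacle is the second step: establishing the asymptotic for $\overline{K_{n}^{(0, 1)}} / (n K_{n})$ in the full generality of Stahl--Totik--Ullman regularity with a positive continuous weight on $\mathbb{T}$. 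The OPUC analog was supplied by Corollary 1.2 of \cite{LevinLubinsky2007}; the Bergman counterpart should be extractable from the existing literature on universality of Christoffel functions for regular planar measures, or, if need be, by combining a compactness argument on the normalized kernels with an asymptotic comparison to the model weight $w \equiv 1 / \pi$ computed above.
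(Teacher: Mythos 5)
Your proposal follows the paper's proof essentially verbatim: the paper obtains the result by citing Theorem 3.1 and Corollary 1.4 of Lubinsky \cite{Lubinsky2010} for precisely the two inputs you identify --- the growth $K_{n}(z,z) \asymp n^{2}$ on $\mathbb{T}$ and the limit $\frac{1}{n}\,\overline{K_{n}^{(0,1)}(z,z)}/K_{n}(z,z) \to \frac{2}{3}\,\overline{z}$ --- and then concludes via Green's theorem exactly as you do, so the ``principal obstacle'' you flag is resolved by citation rather than by any new argument. One small slip in your model computation: on $\mathbb{T}$ one has $K_{n}^{(0,1)}(z,z) = z\,n(n+1)(n+2)/(3\pi)$ rather than $\overline{z}\,n(n+1)(n+2)/(3\pi)$, which is what makes $\overline{K_{n}^{(0,1)}}/(nK_{n})$ identically $\tfrac{2}{3}\overline{z}$ as you claim.
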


\begin{proof}
Theorem 3.1 of \cite{Lubinsky2010} asserts that whenever $\theta \in J$ with $z = e^{i \theta}$ there exist positive absolute constants $C_{3}$ and $C_{4}$ such that $C_{3}  \leq \lim_{n \to \infty} \frac{1}{n^{2}} \, K_{n} (z, z) \leq C_{4}$. By Corollary 1.4 of \cite{Lubinsky2010},
\begin{equation*}
\lim_{n \to \infty} \frac{1}{n} \, \frac{\overline{K_{n}^{(0, 1)} (z, z)}}{K_{n} (z, z)}
 = \frac{2}{3} \, \overline{z}.
\end{equation*}
These with Green's theorem give the assertion.
\end{proof}

\section{Numerical simulation} \label{section_9}

In this final section we show a numerical simulation of a random polynomial $\Phi_{n}$ spanned by Bergman polynomials $p_{j}$ at various $v$-level crossings. For a fixed $k > 0$, we consider $p_{j} (z) = \sqrt{(j + 1) (j + k + 1) / (\pi k)} z^{j}$ for $j = 0, \dotsc, n$ with the weight function $w (z) = 1 - \lvert z \rvert^{2 k}$. We set $k = 2$. It is easy to check that the $p_{j}$ satisfy the conditions \eqref{eq-7.1} and \eqref{eq-8.1}.

It is interesting to take a closer look at the relationship between the mean $\operatorname{\mathbb{E}} [N_{\Theta}^{\Phi_{n}, v}]$ and the $v$-level crossings. Figure \ref{figure 1} shows the numerical values of $\operatorname{\mathbb{E}} [N_{\Theta}^{\Phi_{n}, v}]$ represented by the small dots when $n$ is small. The plot is produced with $v = 0 + i 0$, $10 + i 10$, $\dotsc$, $200 + i 200$ in the order left-to-right. The first few values of $\operatorname{\mathbb{E}} [N_{\Theta}^{\Phi_{n}, v}]$ about $v = 0 + i 0$ are $0.727273$, $1.46154$, $2.2$, $2.94118$, $3.68421$, $4.42857$, $5.17391$, $\dotso$. As $\lvert v \rvert$ increases with $n$, we may visualize the relationship between $\operatorname{\mathbb{E}} [N_{\Theta}^{\Phi_{n}, v}]$ and $v$ by plotting the points $(n, \operatorname{\mathbb{E}} [N_{\Theta}^{\Phi_{n}, v}])$ for each $v$ and connecting those points with a curved line. Initially, these lines shift to the right rather than upward. This is due to the fewer number of roots being counted. On close scrutiny it turns out that all the lines are moving upwards towards the leftmost one. Some appear to coincide with it. The differences between the numerical values for $v = 0 + i 0$ and the other levels seem to decrease to 0 when $\lvert v \vert$ increases with $n$ (see Corollaries \ref{Cor-1.4} and \ref{Cor-8.2}).

\begin{figure}[H]
\centering$
\begin{array}{cc}
\includegraphics[width=127mm]{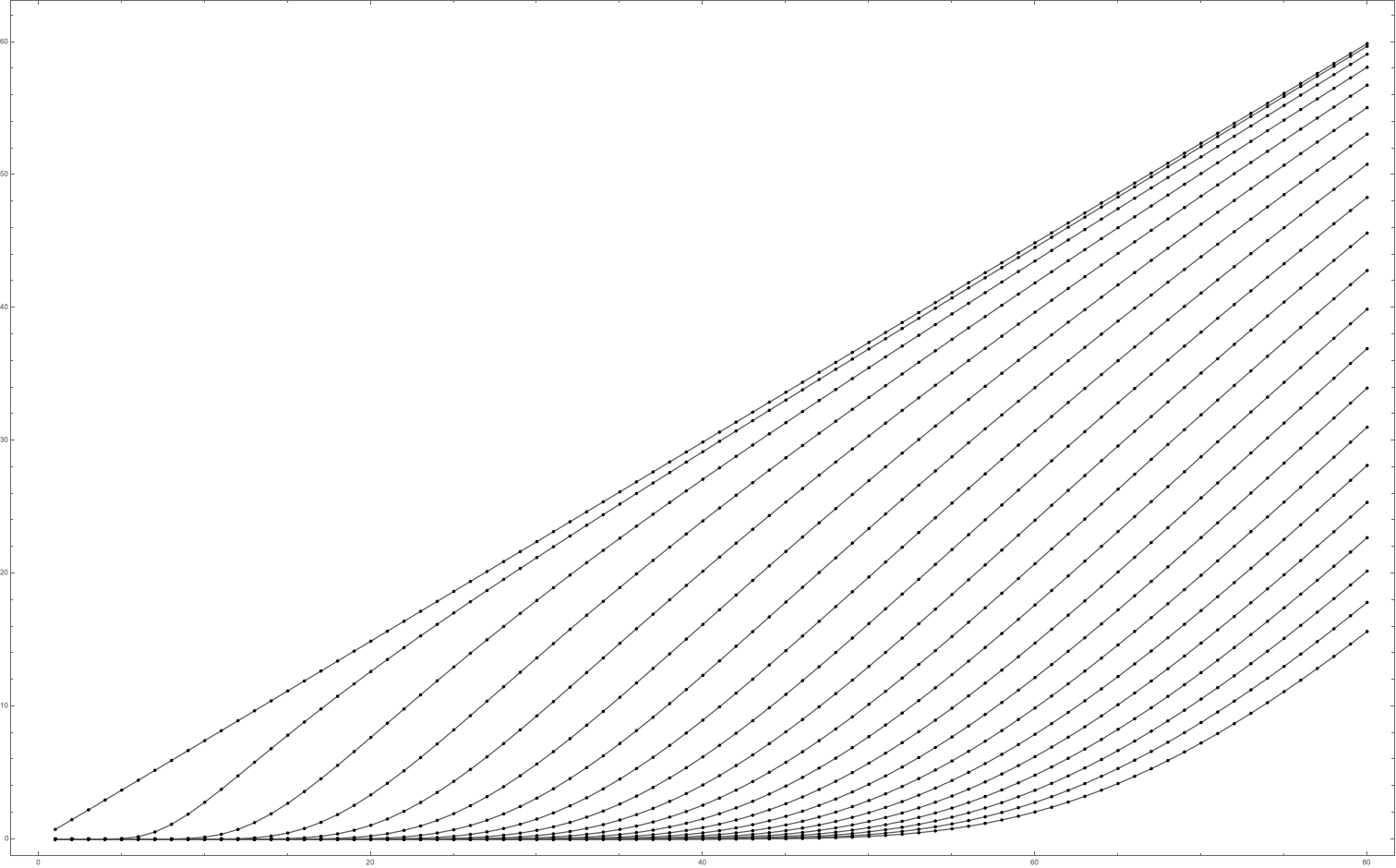}
\end{array}$
\caption{The effect of selection of different $v$-level crossings on the mean $\operatorname{\mathbb{E}} [N_{\Theta}^{\Phi_{n}, v}]$.}
\label{figure 1}
\end{figure}

It is informative to also study the profiles of the density $\rho_{n, v}$ and the roots of the random equation $\Phi_{n} (z) = v$ for some values of the parameters $n$ and $v$ and to compare the analytical results with the numerical evidence. In Figures \ref{figure 2} through \ref{figure 4} we plotted the density profiles obtained by a direct numerical search for the roots of $\Phi_{n} (z) = v$ with $n = 10$ and $v = 0 + i 0$, $5 + i 5$, $40 + i 40$. These results can be compared with the explicit formula for $\rho_{n, v}$ of Theorem \ref{Thm-1.3} for the same values of $n$ and $v$ and with the asymptotic profile of Theorem \ref{Thm-8.1}. In Figure \ref{figure 2} the left-hand plot is a picture of $\rho_{n, v}$ with $n = 10$ and $v = 0 + i 0$. The right-hand plot shows the empirical manifestation of 30,000 complex roots from randomly generated decics in $\Phi_{n} (z) = v$. The pictures in these figures show that the analytical results agree very well with the numerical data set. If we fix $n$ and increase $\lvert v \rvert$, then Figures \ref{figure 3} and \ref{figure 4} give a clear indication that the roots begin to move further away from $\mathbb{T}$ (see Corollary \ref{Cor-1.4}). Finally, if we fix $v$ and increase $n$, then the roots tend to concentrate near $\mathbb{T}$.

\begin{figure}[H]
\centering$
\begin{array}{cc}
\includegraphics[width=60mm]{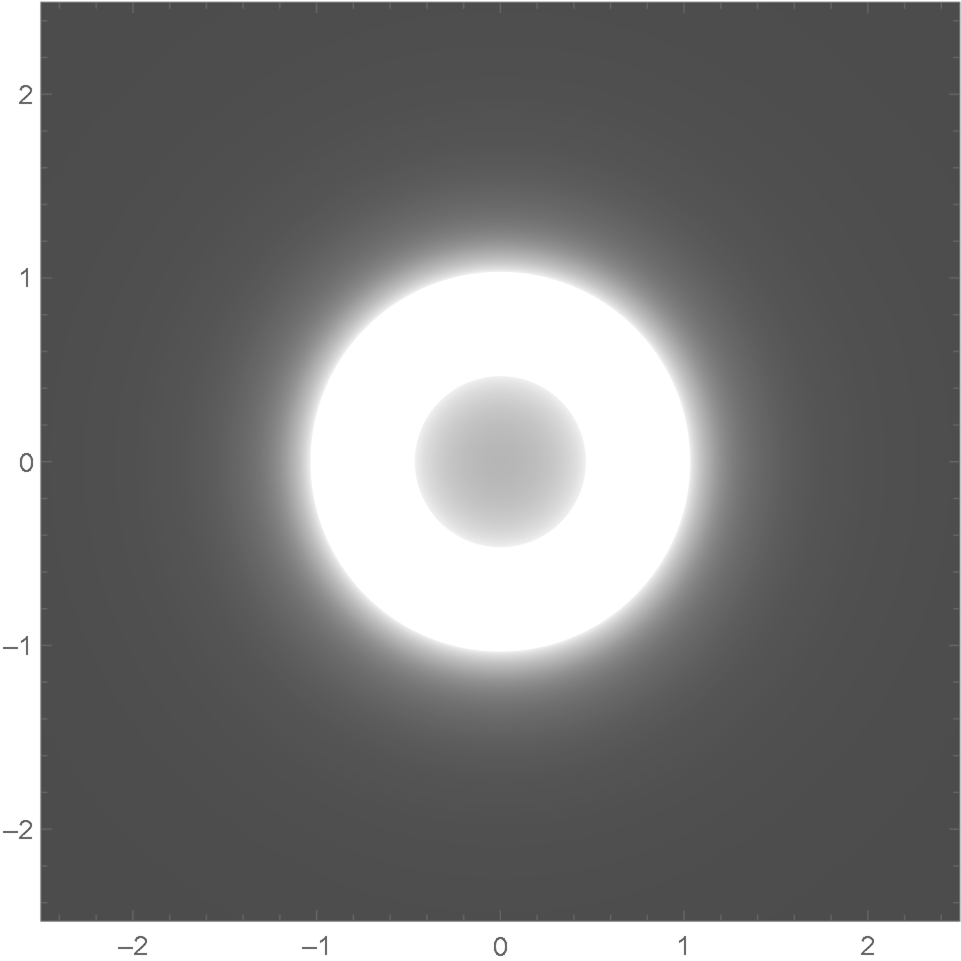} \quad \quad
\includegraphics[width=60mm]{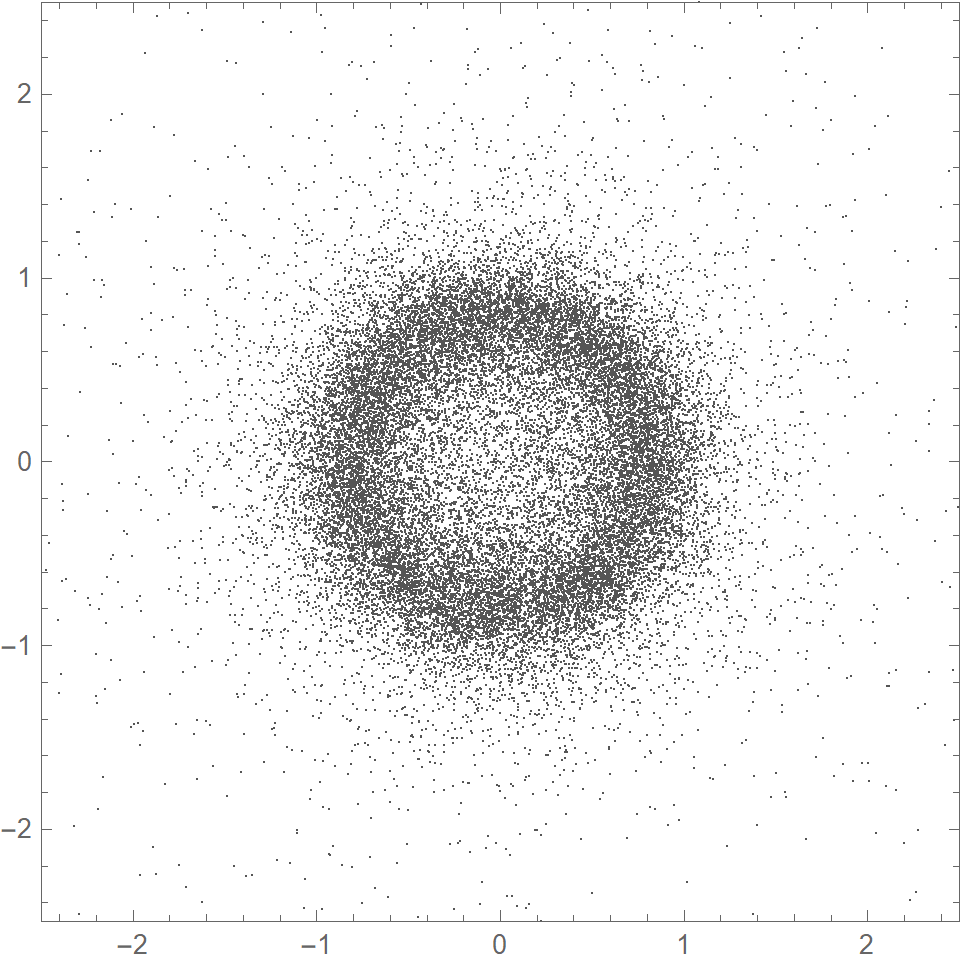}
\end{array}$
\caption{Analytical versus numerical density profiles for the zeros of random decics ($n = 10$) with independent and identically distributed standard complex normal coefficients and crossing about the level $v = 0 + i 0$.}
\label{figure 2}
\end{figure}

\begin{figure}[H]
\centering$
\begin{array}{cc}
\includegraphics[width=60mm]{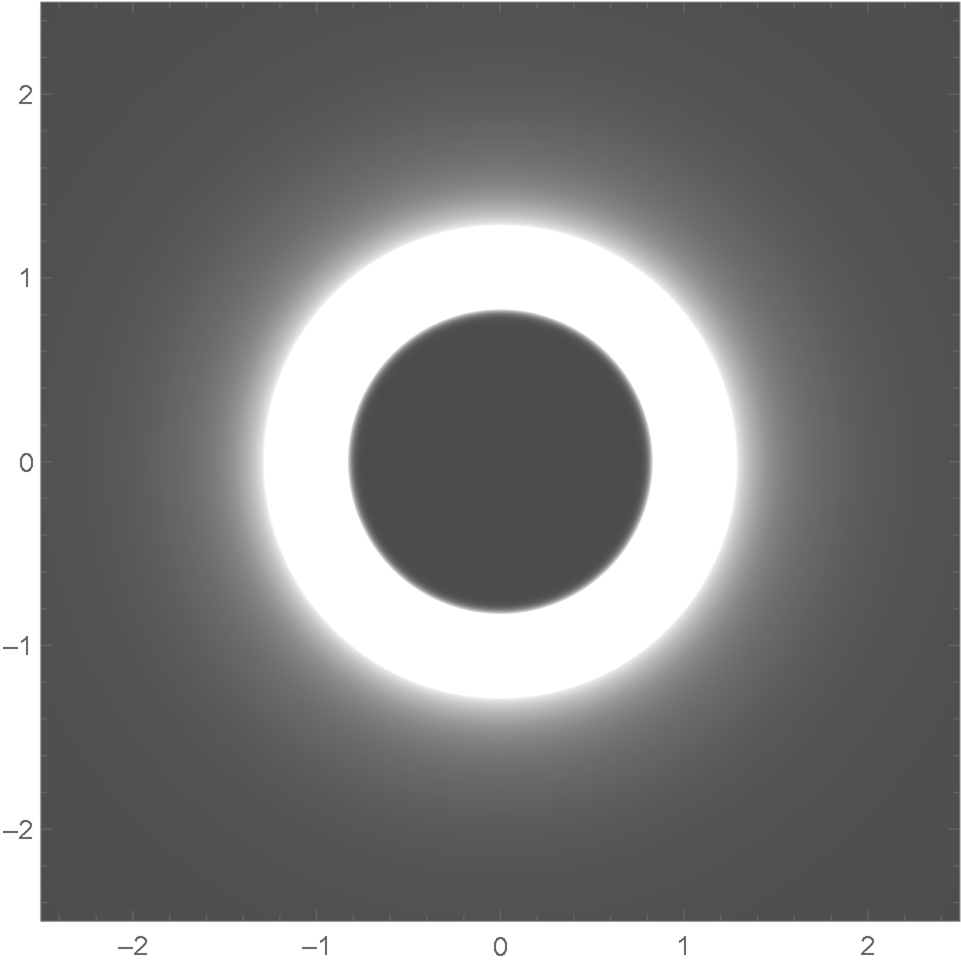} \quad \quad
\includegraphics[width=60mm]{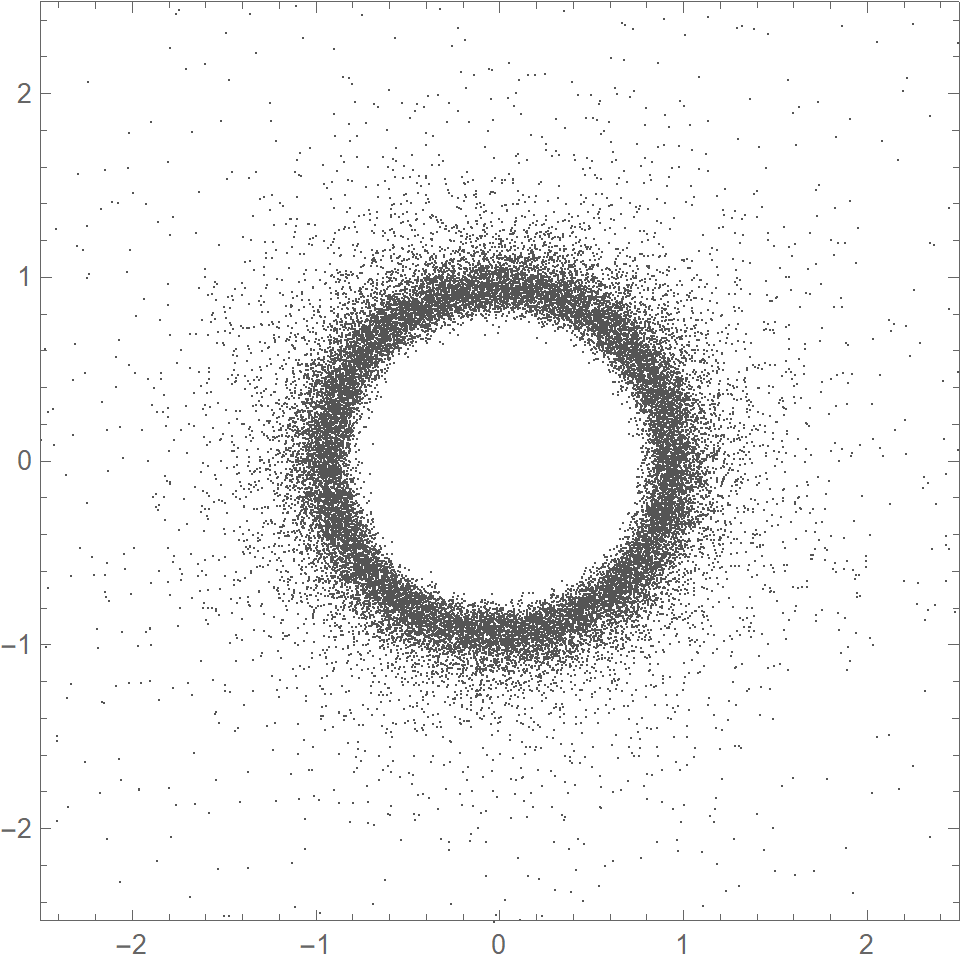}
\end{array}$
\caption{Density profiles for  $n = 10$ and $v = 5 + i 5$.}
\label{figure 3}
\end{figure}

\begin{figure}[H]
\centering$
\begin{array}{cc}
\includegraphics[width=60mm]{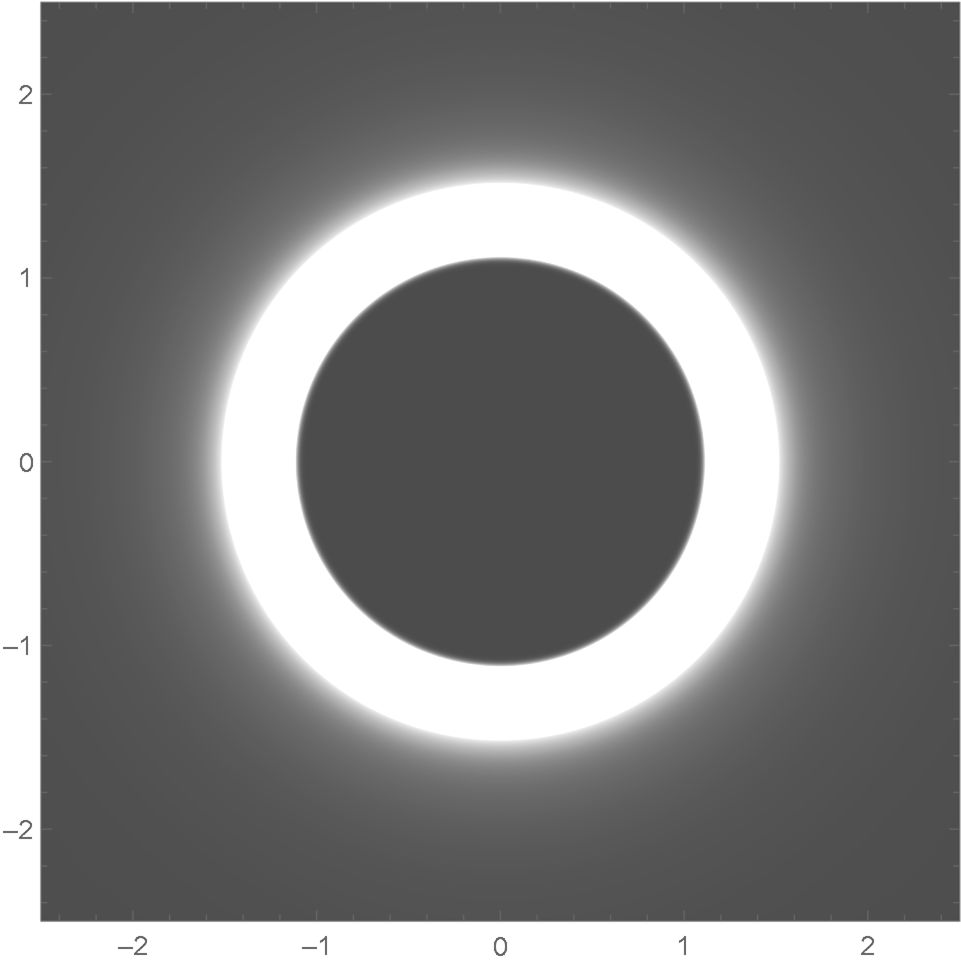} \quad \quad
\includegraphics[width=60mm]{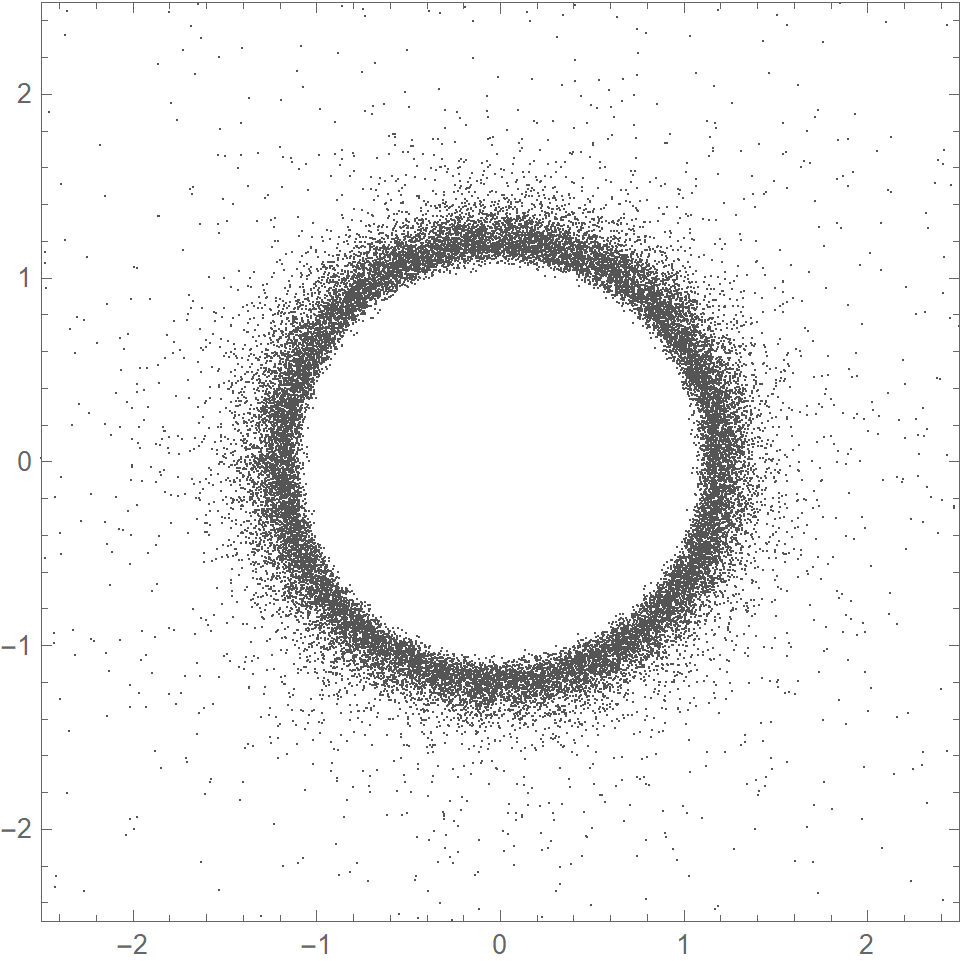}
\end{array}$
\caption{Density profiles for $n = 10$ and $v = 40 + i 40$.}
\label{figure 4}
\end{figure}

\end{document}